\theoremstyle{plain}
\newtheorem{theorem}{Theorem}[section]
\newtheorem{lemma}[theorem]{Lemma}
\newtheorem{corollary}[theorem]{Corollary}
\newtheorem{definition}[theorem]{Definition}
\newtheorem{main theorem}[theorem]{Main Theorem}
\newtheorem{convention}[theorem]{Convention}
\newcommand{\ZZ}{\mathbb{Z}}
\newcommand{\QQ}{\mathbb{Q}}
\newcommand{\RR}{\mathbb{R}}
\newcommand{\HH}{\mathbb{H}}
\newcommand{\QQQ}{\hat{\mathbb{Q}}}
\newcommand{\RRR}{\hat{\mathbb{R}}}
\newcommand{\DD}{\mathcal{D}}
\newcommand{\RGPC}[2]{\Gamma({#1};{#2})}
\newcommand{\RGPP}[1]{\hat\Gamma_{#1}}
\newcommand{\RGP}[1]{\Gamma_{#1}}
\newcommand{\Hecke}{\mbox{$G$}}
\newcommand{\orbs}{\mbox{\boldmath$S$}}
\newcommand{\svert}{\,|\,}
\newcommand{\lp}{(\hskip -0.07cm (}
\newcommand{\rp}{)\hskip -0.07cm )}
\renewcommand\subsection{\@startsection{subsection}{2}{0mm}
    {-10.5dd plus-8pt minus-4pt}{10.5dd}
     {\normalsize\upshape}}
\begin{document}

\title{Homotopically equivalent simple loops
on 2-bridge spheres in Heckoid orbifolds for 2-bridge links (II)}

\author{Donghi Lee}
\address{Department of Mathematics\\
Pusan National University \\
San-30 Jangjeon-Dong, Geumjung-Gu, Pusan, 609-735, Republic of Korea}
\email{donghi@pusan.ac.kr}

\author{Makoto Sakuma}
\address{Department of Mathematics\\
Graduate School of Science\\
Hiroshima University\\
Higashi-Hiroshima, 739-8526, Japan}
\email{sakuma@math.sci.hiroshima-u.ac.jp}

\subjclass[2010]{Primary 20F06, 57M25\\
\indent {The first author was supported by Basic Science Research Program
through the National Research Foundation of Korea(NRF) funded
by the Ministry of Education, Science and Technology(2012R1A1A3009996).
The second author was supported
by JSPS Grants-in-Aid 22340013.}}

\begin{abstract}
In this paper and its prequel,
we give a necessary and sufficient condition
for two essential simple loops on a $2$-bridge sphere
in an even Heckoid orbifold for a $2$-bridge link
to be homotopic in the orbifold.
We also give a necessary and sufficient condition
for an essential simple loop on a $2$-bridge sphere
in an even Heckoid orbifold for a $2$-bridge link
to be peripheral or torsion in the orbifold.
The prequel treated the case when
the $2$-bridge link is a $(2,p)$-torus link,
and this paper treats the remaining cases.
\end{abstract}
\maketitle


\section{Introduction}
Let $K(r)$ be the $2$-bridge link of slope $r \in \QQ$
and let $n$ be an integer or a half-integer greater than $1$.
Also let $\orbs(r;n)$ be the Heckoid orbifold of index $n$ for $K(r)$,
and let $\Hecke(r;n)$ be the Heckoid group of index $n$ for $K(r)$
which is the orbifold fundamental group of $\orbs(r;n)$.
According to whether $n$ is an integer or a non-integral half-integer,
the Heckoid group $\Hecke(r;n)$ and the Heckoid orbifold $\orbs(r;n)$
are said to be {\it even} or {\it odd}.

The purpose of the present paper and its prequel~\cite{lee_sakuma_9}
is to give the following complete solution to the natural question
proposed in \cite[Question~2.1]{lee_sakuma_9}
for even Heckoid orbifolds,
where we use the same notation and terminology as in
\cite{lee_sakuma_9} without specifically mentioning.

\begin{main theorem}
\label{thm:conjugacy}
Suppose that $r$ is a non-integral rational number and
that $n$ is an integer greater than $1$.
Then the following hold.
\begin{enumerate}[\indent \rm (1)]
\item The simple loops $\{\alpha_s \svert s\in I(r;n)\}$ represent
mutually distinct conjugacy classes in $\Hecke(r;n)$.

\item There is no rational number $s \in I(r;n)$
for which $\alpha_s$ is peripheral in $\Hecke(r;n)$.

\item There is no rational number $s \in I(r;n)$
for which $\alpha_s$ is torsion in $\Hecke(r;n)$.
\end{enumerate}
\end{main theorem}

In the prequel~\cite{lee_sakuma_9},
we treated the case when $r\equiv \pm1/p \pmod{1}$ for some integer $p \ge 2$.
And this paper treats the remaining cases.
The key tool used in the proofs is small cancellation theory
applied to the upper presentations of even Heckoid groups.

This paper is organized as follows.
In Section~\ref{sec:technical_lemmas},
we establish technical lemmas which will play essential roles in the succeeding sections.
Sections~\ref{sec:proof of main theorem(1) for the general case} and \ref{sec:proof of main theorem(3)}
are devoted to the proof of Main Theorem~\ref{thm:conjugacy}.

\section{Technical Lemmas}
\label{sec:technical_lemmas}

In the remainder of this paper unless specified otherwise,
suppose that $r$ is a rational number with $0<r<1$
such that $r \neq 1/p$ for any integer $p \ge 2$,
and let $n$ be an integer with $n \ge 2$.
Write $r$ as a continued fraction expansion $r=[m_1, m_2, \dots, m_k]$, where $k \ge 2$,
$(m_1, \dots, m_k) \in (\mathbb{Z}_+)^k$ and $m_k \ge 2$.
For brevity, we often write $m$ for $m_1$.
Recall that the region, $R$, bounded by a pair of
Farey edges with an endpoint $\infty$
and a pair of Farey edges with an endpoint $r$
forms a fundamental domain for the action of $\RGPC{r}{n}$ on $\HH^2$
(see \cite[Figure~1]{lee_sakuma_9}).
Let $I_1(r;n)$ and $I_2(r;n)$ be the (closed or half-closed) intervals in $\RR$
defined as follows:
\[
\begin{aligned}
I_1(r;n) &=
\begin{cases}
[0, r_1], \ \mbox{where} \ r_1=[m_1, \dots, m_{k-1}, m_k-1, 2] , & \mbox{if $k$ is even,}\\
[0, r_1), \ \mbox{where} \ r_1=[m_1, \dots, m_k, 2n-2], & \mbox{if $k$ is odd,}
\end{cases}\\
I_2(r;n) &=
\begin{cases}
(r_2,1], \ \mbox{where} \ r_2=[m_1, \dots, m_k, 2n-2], & \mbox{if $k$ is even,}\\
[r_2, 1], \ \mbox{where} \ r_2=[m_1, \dots, m_{k-1}, m_k-1, 2], & \mbox{if $k$ is odd.}
\end{cases}
\end{aligned}
\]
Then we may choose a fundamental domain $R$ so that
the intersection of $\bar R$ with $\partial \HH^2$ is equal to
the union $\bar I_1(r;n) \cup \bar I_2(r;n)\cup \{\infty,r\}$.

\subsection{The case when $s \in I_1(r;n) \cup I_2(r;n)$}

In this subsection, we investigate important properties of $CS(s)$
for a rational number $s$ such that $s \in I_1(r;n) \cup I_2(r;n)$.
These properties will be used in the proof of Main Theorem~\ref{thm:conjugacy}
in the succeeding sections.
The following lemma is a slight refinement of \cite[Proposition~5.1]{lee_sakuma_7}.

\begin{lemma}
\label{lem:connection}
Let $S(r)=(S_1, S_2, S_1, S_2)$ be as in \cite[{\it Lemma}~3.9]{lee_sakuma_9}.
Then for any rational number $s \in I_1(r;n) \cup I_2(r;n)$, the following hold.
\begin{enumerate}[\indent \rm (1)]
\item If $k$ is even, then $CS(s)$ does not contain
$((2n-2) \langle S_1, S_2 \rangle, S_1)$ as a subsequence.

\item If $k$ is odd, then $CS(s)$ does not contain
$((2n-2) \langle S_2, S_1 \rangle, S_2)$ as a subsequence.
\end{enumerate}
\end{lemma}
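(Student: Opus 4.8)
The plan is to deduce the lemma from the explicit form of the cyclic sequence $CS(s)$ for $s \in I_1(r;n) \cup I_2(r;n)$ — essentially \cite[Proposition~5.1]{lee_sakuma_7}, recast in the setting of \cite{lee_sakuma_9} — together with the continued-fraction combinatorics that fix the endpoints $r_1$ and $r_2$. The refinement over \cite[Proposition~5.1]{lee_sakuma_7} consists exactly in keeping track of the alternation count $2n-2$ imposed by the Heckoid index.

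First I would recall from \cite[Lemma~3.9]{lee_sakuma_9} the precise form of $S(r) = (S_1, S_2, S_1, S_2)$, in particular the lengths of $S_1$ and $S_2$ and how they are assembled from the partial quotients $m_1, \dots, m_k$, together with the rule producing $CS(s)$ from the continued fraction expansion of $s$. The key structural point is a dictionary between the two: a maximal alternating run $S_1, S_2, S_1, \dots$ in $CS(s)$ is produced by, and only by, a correspondingly rigid block in the expansion of $s$ that matches a pattern dictated by $r$ (and, near the two ends, by $n$), with one alternation step contributed by each matched partial quotient. Under this dictionary, $CS(s)$ containing $((2n-2)\langle S_1, S_2 \rangle, S_1)$ when $k$ is even — and, symmetrically, $((2n-2)\langle S_2, S_1 \rangle, S_2)$ when $k$ is odd — is exactly the situation in which the continued fraction expansion of $s$ contains, after a cyclic shift and possibly after replacing $s$ by its second continued fraction expansion, the tail $\dots, m_k, 2n-2, \dots$: the tail that occurs in the definition of $r_2$ if $k$ is even and of $r_1$ if $k$ is odd, the final extra block $S_1$ (resp.\ $S_2$) accounting for at least one further partial quotient after the $2n-2$.

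Granting this, I argue by contradiction. Assume $CS(s)$ contains the forbidden subsequence; then, after normalization, the expansion of $s$ reads $m_1, \dots, m_k, t, \dots$ with $t \ge 2n-2$ and at least one further partial quotient. The monotonicity of continued fractions — with the convention making $[a_1, a_2, \dots] \in (0,1)$, the value is monotone in each entry, decreasing in entries of odd index and increasing in entries of even index — then sandwiches $s$ strictly between $r = [m_1, \dots, m_k]$ and $[m_1, \dots, m_k, 2n-2]$: strictly between $r$ and $r_2$ when $k$ is even, and strictly between $r_1$ and $r$ when $k$ is odd. In either case $s$ lands in the open interval $(r_1, r_2)$ straddling $r$, which is disjoint from $I_1(r;n) \cup I_2(r;n)$, and this is the desired contradiction. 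It therefore suffices to treat the $k$ even case in detail, the $k$ odd case following by interchanging $S_1 \leftrightarrow S_2$, $I_1(r;n) \leftrightarrow I_2(r;n)$, and $r_1 \leftrightarrow r_2$.

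The step I expect to be the main obstacle is making the dictionary quantitatively sharp at the boundary: one must check that an alternating run of length exactly $2(2n-2)+1$ in $CS(s)$ corresponds to the tail $\dots, m_k, 2n-2, \dots$ with the distinguished partial quotient equal to exactly $2n-2$ — not $2n-1$, not $2n-3$ — so that the resulting comparison is genuinely against $r_1$ or $r_2$ and in the direction that expels $s$ from its interval, even when $s$ is very close to $r_1$ or to $r_2$. A secondary nuisance is the bookkeeping forced by the two continued fraction expansions of a rational number and by small-parameter degeneracies — for instance $m_k = 2$, where the partial quotient $m_k - 1 = 1$ has to be absorbed, or $s$ equal to an endpoint of $I_1(r;n)$ or $I_2(r;n)$ — none of which should create a loophole but all of which must be verified.
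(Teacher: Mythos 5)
There is a genuine gap. Your plan correctly identifies the shape of the desired conclusion --- that the forbidden subsequence should force $s$ into the excluded middle interval between $r_1$ and $r_2$ --- and this is indeed how the paper's base cases finish. But the ``dictionary'' between long alternating $(S_1,S_2)$-runs in $CS(s)$ and tails of the continued fraction expansion of $s$ is asserted rather than proved, and it is the entire content of the lemma. The relation between $CS(s)$ and the partial quotients of $s$ is mediated one level at a time by the $T$-sequence operation ($CS(\tilde s)=CT(s)$), so unwinding it is unavoidably an induction on $k$: the paper first shows the forbidden pattern forces $CS(s)$ to consist of $m$ and $m+1$ (pinning down $l_1$ and, via $m_2$, the value of $l_2$), then passes to $\tilde r,\tilde s$, checks $\tilde s\in I_1(\tilde r;n)\cup I_2(\tilde r;n)$, and uses the prequel's Lemma~3.12 together with the fact that $S_1$ begins and ends with $m+1$ and $S_2$ with $m$ to kill the slack terms $\ell',\ell''$ and transport the forbidden pattern to $CS(\tilde s)$; the base cases $\tilde r=[p]$ are then settled by the prequel's Lemma~5.1 or by an explicit interval computation. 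Your proposal flags this step as ``the main obstacle'' but supplies no argument for it, and there is no shortcut around it.

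Two further concrete problems. First, the claimed normal form ``$s=[m_1,\dots,m_k,t,\dots]$ with $t\ge 2n-2$'' is not what the hypothesis forces: already for $r=[m,2]$ (the paper's Case~2) the forbidden pattern also permits $s=[m,1,1,l_4,\dots,l_t]$, which approaches $r$ from the other side and is not of your form under either continued fraction expansion of $s$; one still concludes $s\in(r_1,r_2)$, but not by the single comparison you describe. Second, the proposed symmetry ``do $k$ even in detail, get $k$ odd by swapping $S_1\leftrightarrow S_2$, $I_1\leftrightarrow I_2$, $r_1\leftrightarrow r_2$'' is not available: $S_1$ and $S_2$ play genuinely asymmetric roles ($S_1$ begins and ends with $m+1$, $S_2$ with $m$, and only $S_1$ has the uniqueness property in $CS(r)$ used elsewhere), and the reduction $r\mapsto\tilde r$ preserves the parity of $k$ in the $m_2=1$ case but reverses it when $m_2\ge 2$. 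This is exactly why the paper must prove assertions (1) and (2) simultaneously by one intertwined induction rather than deducing one from the other.
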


\begin{proof}
We prove (1) and (2) simultaneously by induction on $k \ge 2$.
As we declared at the beginning of this section, we write $m$ for $m_1$, for simplicity.
By \cite[Lemma~3.9]{lee_sakuma_9}, $S_1$ begins and ends with $m+1$,
and $S_2$ begins and ends with $m$.
Suppose on the contrary that there exists some $s \in I_1(r;n) \cup I_2(r;n)$
for which $CS(s)$ contains $((2n-2) \langle S_1, S_2 \rangle, S_1)$ as a subsequence
provided $k$ is even
and $((2n-2) \langle S_2, S_1 \rangle, S_2)$ as a subsequence
provided $k$ is odd.
This implies by \cite[Lemma~3.5]{lee_sakuma_9} that
$CS(s)$ consists of $m$ and $m+1$.
So $s \neq 0$ and $s$ has a continued fraction expansion $s=[l_1, \dots, l_t]$, where
$t \ge 2$, $(l_1, \dots, l_t) \in (\mathbb{Z}_+)^t$, $l_1=m$ and $l_t \ge 2$.
For the rational numbers $r$ and $s$, define the rational numbers
$\tilde{r}$ and $\tilde{s}$ as in \cite[Lemma~3.8]{lee_sakuma_9}
so that $CS(\tilde{r})=CT(r)$ and $CS(\tilde{s})=CT(s)$.

We consider three cases separately.

\medskip
\noindent {\bf Case 1.} $m_2=1$.
\medskip

In this case, $k \ge 3$ and, by \cite[Corollary~3.14(1)]{lee_sakuma_9},
$(m+1, m+1)$ appears in $S_1$, so in $CS(s)$, as a subsequence.
Thus by \cite[Lemma~3.5]{lee_sakuma_9}, $l_2=1$ and so $t \ge 3$.
So, we have
\[
\tilde{r}=[m_3, \dots, m_k] \quad \text{\rm and} \quad
\tilde{s}=[l_3, \dots, l_t].
\]
It follows from $s \in I_1(r;n) \cup I_2(r;n)$ that
$\tilde{s} \in I_1(\tilde{r};n) \cup I_2(\tilde{r};n)$.
At this point, we divide this case into three subcases.

\medskip
\noindent {\bf Case 1.a.} $k=3$.
\medskip

By \cite[Lemma~3.12(1)]{lee_sakuma_9}, $S_1=(m_3\langle m+1 \rangle)$ and $S_2 =(m)$.
Since $((2n-2) \langle S_2, S_1 \rangle, S_2)$ is contained in $CS(s)$ by the assumption,
this implies that $CS(\tilde{s})=CT(s)$ contains
$((2n-2) \langle m_3 \rangle)$ as a subsequence.
But since $\tilde{r}=1/m_3=[m_3]$ and $\tilde{s} \in I_1(\tilde{r};n) \cup I_2(\tilde{r};n)$,
this gives a contradiction to \cite[Lemma~5.1]{lee_sakuma_9}.

\medskip
\noindent {\bf Case 1.b.} $k\ge 4$ and $k$ is even.
\medskip

Let $S(\tilde{r})=(T_1, T_2, T_1, T_2)$ be the decomposition of $S(\tilde{r})$
given by \cite[Lemma~3.9]{lee_sakuma_9}.
Since $S_1$ begins and ends with $m+1$, $S_2$ begins and ends with $m$,
and since $((2n-2) \langle S_1, S_2 \rangle, S_1)$ is contained in $CS(s)$ by the assumption,
we see by \cite[Lemma~3.12(2)]{lee_sakuma_9} that
$CS(\tilde{s})=CT(s)$ contains, as a subsequence,
\[
(t_1+\ell', t_2, \dots, t_{s_1-1}, t_{s_1}, T_2, (2n-3) \langle T_1, T_2 \rangle,
t_1, t_2, \dots, t_{s_1-1}, t_{s_1}+\ell''),
\]
where $(t_1, t_2, \dots, t_{s_1})=T_1$ and $\ell', \ell'' \in\ZZ_+\cup\{0\}$.
Since $t_1=t_{s_1}=m_3+1$ by \cite[Lemma~3.9]{lee_sakuma_9},
this actually implies that $\ell'=\ell''=0$, and therefore
$CS(\tilde{s})$ contains
$((2n-2) \langle T_1, T_2 \rangle, T_1)$ as a subsequence.
But since $\tilde{r}=[m_3, \dots, m_k]$ and
$\tilde{s} \in I_1(\tilde{r};n) \cup I_2(\tilde{r};n)$,
this gives a contradiction to the induction hypothesis.

\medskip
\noindent {\bf Case 1.c.} $k\ge 4$ and $k$ is odd.
\medskip

Let $S(\tilde{r})=(T_1, T_2, T_1, T_2)$ be the decomposition of $S(\tilde{r})$
given by \cite[Lemma~3.9]{lee_sakuma_9}.
Since $S_1$ begins and ends with $m+1$, $S_2$ begins and ends with $m$,
and since $((2n-2) \langle S_2, S_1 \rangle, S_2)$ is contained in $CS(s)$ by the assumption,
we see by \cite[Lemma~3.12(2)]{lee_sakuma_9} that
$CS(\tilde{s})=CT(s)$ contains $((2n-2) \langle T_2, T_1 \rangle, T_2)$ as a subsequence.
But since $\tilde{r}=[m_3, \dots, m_k]$ and
$\tilde{s} \in I_1(\tilde{r};n) \cup I_2(\tilde{r};n)$,
this gives a contradiction to the induction hypothesis.

\medskip
\noindent {\bf Case 2.} $k=2$ and $m_2=2$.
\medskip

In this case, $r=[m,2]$, so by \cite[Lemma~3.12(3)]{lee_sakuma_9}, $S_1=(m+1)$ and $S_2=(m)$.
Since $((2n-2) \langle S_1, S_2 \rangle, S_1)$ is contained in $CS(s)$ by the assumption,
$((2n-2) \langle m+1, m \rangle, m+1)$ is contained in $CS(s)$.
This implies that $CS(\tilde{s})=CT(s)$ contains
$((2n-2) \langle 1 \rangle)$ as a subsequence.
Moreover, we can see that this subsequence is proper,
i.e., it is not equal to the whole cyclic sequence $CS(\tilde{s})=CT(s)$.
As described below, this in turn implies that
$s$ has the form
either
$s=[m,1, 1, l_4 \dots, l_t]$
or $s=[m,2, l_3, \dots, l_t]$ with $l_3 \ge 2n-2$.
If $l_2=1$, then $\tilde s=[l_3,\dots,l_t]$ and so
$l_3$ is the minimal component of $CS(\tilde s)$
(see \cite[Lemma~3.5]{lee_sakuma_9}).
Hence we must have $l_3=1$,
i.e.,
$s=[m,1, 1, l_4 \dots, l_t]$,
because $CS(\tilde{s})$ contains $1$ as a component.
On the other hand, if $l_2\ge 2$, then $\tilde s=[l_2-1,\dots,l_t]$ and so
$l_2-1$ is the minimal component of $CS(\tilde s)$.
Since $CS(\tilde{s})$ contains $1$ as a component,
we have
$l_2-1=1$, i.e., $l_2=2$.
Since $CS(\tilde{s})$ contains $((2n-2) \langle 1 \rangle)$ as a
subsequence,
we see that
$CS(\tilde{\tilde s})=CT(\tilde s)$ contains a component $\ge 2n-2$.
Since the subsequence $((2n-2) \langle 1 \rangle)$ of
$CS(\tilde{s})$ is proper, we see $t\ge 3$ and $l_3\ge 2$.
Thus $\tilde{\tilde s}=[l_3-1,\dots,l_t]$ and therefore
$l_3-1$ is the minimal component of $CS(\tilde{\tilde s})$.
Hence we must have
$l_3=(l_3-1)+1\ge 2n-2$
and so $s=[m,2, l_3, \dots, l_t]$ with $l_3 \ge 2n-2$.

But then $s$ cannot belong to the interval
$I_1(r;n) \cup I_2(r;n)=[0,r_1] \cup (r_2, 1]$,
where $r_1=[m,1, 2]$ and $r_2=[m,2, 2n-2]$, a contradiction to the hypothesis.

\medskip
\noindent {\bf Case 3.} Either both $k=2$ and $m_2 \ge 3$ or both $k \ge 3$ and $m_2 \ge 2$.
\medskip

In this case, by \cite[Corollary~3.14(2)]{lee_sakuma_9},
$(m,m)$ appears in $S_2$, so in $CS(s)$, as a subsequence.
So $l_2\ge 2$ by \cite[Lemma~3.5]{lee_sakuma_9}, and thus we have
\[
\tilde{r}=[m_2-1,m_3, \dots, m_k] \quad \text{\rm and} \quad
\tilde{s}=[l_2-1,l_3, \dots, l_t].
\]
It follows from $s \in I_1(r;n) \cup I_2(r;n)$ that
$\tilde{s} \in I_1(\tilde{r};n) \cup I_2(\tilde{r};n)$.
At this point, we consider three subcases separately.

\medskip
\noindent {\bf Case 3.a.} $k=2$ and $m_2 \ge 3$.
\medskip

By \cite[Lemma~3.12(3)]{lee_sakuma_9}, $S_1=(m+1)$ and $S_2 =((m_2-1)\langle m \rangle)$.
Since $((2n-2) \langle S_1, S_2 \rangle, S_1)$ is contained in $CS(s)$ by the assumption,
$CS(\tilde{s})=CT(s)$ contains $((2n-2) \langle m_2-1 \rangle)$ as a subsequence.
But since $\tilde{r}=1/(m_2-1)=[m_2-1]$ and
$\tilde{s} \in I_1(\tilde{r};n) \cup I_2(\tilde{r};n)$,
this gives a contradiction to \cite[Lemma~5.1]{lee_sakuma_9}.

\medskip
\noindent {\bf Case 3.b.} $k \ge 3$ is even and $m_2 \ge 2$.
\medskip

Let $S(\tilde{r})= (T_1, T_2, T_1, T_2)$ be
the decomposition of $S(\tilde{r})$ given by \cite[Lemma~3.9]{lee_sakuma_9}.
Since $S_1$ begins and ends with $m+1$, $S_2$ begins and ends with $m$,
and since $((2n-2) \langle S_1, S_2 \rangle, S_1)$
is contained in $CS(s)$ by the assumption,
we see by \cite[Lemma~3.12(4)]{lee_sakuma_9} that
$CS(\tilde{s})=CT(s)$ contains $((2n-2) \langle T_2, T_1 \rangle, T_2)$ as a subsequence.
But since $\tilde{r}=[m_2-1, m_3, \dots, m_k]$ and
$\tilde{s} \in I_1(\tilde{r};n) \cup I_2(\tilde{r};n)$,
this gives a contradiction to the induction hypothesis.

\medskip
\noindent {\bf Case 3.c.} $k \ge 3$ is odd and $m_2 \ge 2$.
\medskip

Let $S(\tilde{r})= (T_1, T_2, T_1, T_2)$ be
the decomposition of $S(\tilde{r})$ given by \cite[Lemma~3.9]{lee_sakuma_9}.
Since $S_1$ begins and ends with $m+1$, $S_2$ begins and ends with $m$,
and since $((2n-2) \langle S_2, S_1 \rangle, S_2)$ is contained in $CS(s)$ by the assumption,
we see by \cite[Lemma~3.12(4)]{lee_sakuma_9} that
$CS(\tilde{s})=CT(s)$ contains, as a subsequence,
\[
(t_1+\ell', t_2, \dots, t_{s_1-1}, t_{s_1}, T_2, (2n-3) \langle T_1, T_2 \rangle, t_{s_1-1}, t_{s_1}+\ell''),
\]
where $(t_1, t_2, \dots,t_{s_1})=T_1$ and $\ell', \ell'' \in\ZZ_+\cup\{0\}$.
Since $t_1=t_{s_1}=(m_2-1)+1=m_2$ by \cite[Lemma~3.9]{lee_sakuma_9},
this actually implies that $\ell'=\ell''=0$,
and therefore
$CS(\tilde{s})$ contains
$((2n-2) \langle T_1, T_2 \rangle, T_1)$ as a subsequence.
But since $\tilde{r}=[m_2-1, m_3, \dots, m_k]$ and
$\tilde{s} \in I_1(\tilde{r};n) \cup I_2(\tilde{r};n)$,
this gives a contradiction to the induction hypothesis.

The proof of Lemma~\ref{lem:connection} is now completed.
\end{proof}

As an easy consequence of Lemma~\ref{lem:connection} and
\cite[Lemma~4.3(3)]{lee_sakuma_9},
we obtain the following.

\begin{corollary}
\label{cor:consecutive_vertices}
For any rational number $s \in I_1(r;n)\cup I_2(r;n)$,
the cyclic word $(u_s)$ cannot contain a subword
$w$ of the cyclic word $(u_r^{\pm n})$
which is a product of $4n-1$ pieces
but is not a product of less than $4n-1$ pieces.
\end{corollary}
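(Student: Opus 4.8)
The plan is to obtain Corollary~\ref{cor:consecutive_vertices} as a short deduction from Lemma~\ref{lem:connection} combined with \cite[Lemma~4.3(3)]{lee_sakuma_9}, arguing by contradiction. So I would begin by assuming that, for some rational number $s \in I_1(r;n)\cup I_2(r;n)$, the cyclic word $(u_s)$ does contain a subword $w$ of the cyclic word $(u_r^{\pm n})$ which is a product of $4n-1$ pieces but not a product of fewer than $4n-1$ pieces, and aim to contradict Lemma~\ref{lem:connection}.

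The first key step is the elementary observation that, since $w$ is a subword of the cyclic word $(u_s)$, the sequence of exponents read off from $w$ — that is $CS(w)$ — occurs as a subsequence of $CS(s)$ (after discarding the two extremal terms of $CS(w)$, which may be truncated at the ends of $w$ but play no role in what follows). The second, and substantive, step is to invoke \cite[Lemma~4.3(3)]{lee_sakuma_9}: because $w$ is a subword of $(u_r^{\pm n})$ realized as a product of exactly $4n-1$ pieces and no fewer, that lemma forces $CS(w)$ to contain a long alternating run of the blocks $S_1$ and $S_2$ of the decomposition $S(r)=(S_1,S_2,S_1,S_2)$ from \cite[Lemma~3.9]{lee_sakuma_9} — concretely $((2n-2)\langle S_1,S_2\rangle,S_1)$ when $k$ is even and $((2n-2)\langle S_2,S_1\rangle,S_2)$ when $k$ is odd. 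Combining the two steps, $CS(s)$ would contain $((2n-2)\langle S_1,S_2\rangle,S_1)$ as a subsequence for even $k$, respectively $((2n-2)\langle S_2,S_1\rangle,S_2)$ for odd $k$, which is precisely what Lemma~\ref{lem:connection} forbids; this contradiction would complete the proof.

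The step I expect to require the most care is matching the two orientations in $(u_r^{\pm n})$ with the two cases of Lemma~\ref{lem:connection}: passing from $u_r^n$ to $u_r^{-n}$ reverses the cyclic sequence and interchanges the roles of the (reversed) blocks $S_1$ and $S_2$, so one must check that the conclusion of \cite[Lemma~4.3(3)]{lee_sakuma_9} still lands in the exact subsequence shape ruled out by Lemma~\ref{lem:connection}. Since $S_1$ begins and ends with $m+1$ while $S_2$ begins and ends with $m$ (again by \cite[Lemma~3.9]{lee_sakuma_9}), this bookkeeping is routine, and I do not expect any input beyond the two cited lemmas to be needed — in keeping with the corollary being an easy consequence of them.
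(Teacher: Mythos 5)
Your proposal is correct and coincides with the paper's intended argument: the corollary is stated there as an immediate consequence of Lemma~\ref{lem:connection} together with \cite[Lemma~4.3(3)]{lee_sakuma_9}, with no further proof given, and your contradiction argument (such a subword would force $CS(s)$ to contain $((2n-2)\langle S_1,S_2\rangle,S_1)$ or $((2n-2)\langle S_2,S_1\rangle,S_2)$, which Lemma~\ref{lem:connection} forbids) is exactly that deduction. The only point worth noting is that the cited lemma yields one of the two long alternating patterns independently of the parity of $k$, but since a run of $2n-1$ alternations of $S_1$ and $S_2$ contains both parity-specific forbidden subsequences, the bookkeeping you flag goes through in every case.
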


\subsection{The case when $s \in I_1(r) \cup I_2(r)$}

If $\RGP{r}$ is the group of automorphisms of
the Farey tessellation $\DD$ generated by reflections in the edges of $\DD$ with an endpoint $r$,
and $\RGPP{r}$ is the group generated by $\RGP{r}$ and $\RGP{\infty}$,
then the region, $Q$, bounded by a pair of Farey edges with an endpoint $\infty$
and a pair of Farey edges with an endpoint $r$
forms a fundamental domain of the action of $\RGPP{r}$ on $\HH^2$.
Let $I_1(r)$ and $I_2(r)$ be the closed intervals in $\RRR$
obtained as the intersection with $\RRR$ of the closure of $Q$.
Then the intervals $I_1(r)$ and $I_2(r)$ are given by
$I_1(r)=[0,\hat{r}_1]$ and $I_2(r)=[\hat{r}_2,1]$, where
\begin{align*}
\hat{r}_1 &=
\begin{cases}
[m_1, m_2, \dots, m_{k-1}, m_k-1] & \mbox{if $k$ is even,}\\
[m_1, m_2, \dots, m_{k-1}] & \mbox{if $k$ is odd,}
\end{cases}\\
\hat{r}_2 &=
\begin{cases}
[m_1, m_2, \dots, m_{k-1}] & \mbox{if $k$ is even,}\\
[m_1, m_2, \dots, m_{k-1}, m_k-1] & \mbox{if $k$ is odd.}
\end{cases}
\end{align*}
Clearly $I_1(r) \subsetneq I_1(r;n)$ and $I_2(r) \subsetneq I_2(r;n)$.
It was shown in {\cite[Proposition~4.6]{Ohtsuki-Riley-Sakuma}} that
if two elements $s$ and $s'$ of $\QQQ$ belong to the same $\RGPP{r}$-orbit,
then the unoriented loops $\alpha_s$ and $\alpha_{s'}$ are homotopic in $S^3-K(r)$.

\begin{lemma}
\label{lem:inside_orbit}
Let $S(r)=(S_1, S_2, S_1, S_2)$ be as in \cite[{\it Lemma}~3.9]{lee_sakuma_9}.
For any rational number $s \in I_1(r) \cup I_2(r)$,
either $S_1$ or $S_2$
cannot occur in $CS(s)$ as a subsequence.
\end{lemma}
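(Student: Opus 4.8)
The plan is to argue by contradiction, using induction on $k\ge 2$ and following the scheme of the proof of Lemma~\ref{lem:connection}. Suppose that for some rational $s\in I_1(r)\cup I_2(r)$ both $S_1$ and $S_2$ occur in $CS(s)$ as subsequences. By \cite[Lemma~3.9]{lee_sakuma_9}, $CS(r)=(S_1,S_2,S_1,S_2)$ is a sequence in the two symbols $m:=m_1$ and $m+1$, with $S_1$ beginning and ending with $m+1$ and $S_2$ beginning and ending with $m$; in particular $CS(s)$ contains both $m$ and $m+1$ as components, so by \cite[Lemma~3.5]{lee_sakuma_9} we get $s\neq 0$, $s=[l_1,\dots,l_t]$ with $t\ge 2$, $l_1=m$, $l_t\ge 2$, and $CS(s)$ again a sequence in $m$ and $m+1$ only. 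Introduce $\tilde r$ and $\tilde s$ as in \cite[Lemma~3.8]{lee_sakuma_9}, so that $CS(\tilde r)=CT(r)$ and $CS(\tilde s)=CT(s)$. The first point requiring real care is the descent property: $s\in I_1(r)\cup I_2(r)$ should imply $\tilde s\in I_1(\tilde r)\cup I_2(\tilde r)$ whenever $\tilde r$ is defined. This is the analogue, for the undecorated intervals $I_1(r)=[0,\hat r_1]$ and $I_2(r)=[\hat r_2,1]$, of a descent used freely in the proof of Lemma~\ref{lem:connection}, and it has to be verified directly from the continued-fraction formulas for $\hat r_1,\hat r_2$ and from the way $\tilde r$ is formed from $r$ in each of the three cases below.

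Now run the three cases of Lemma~\ref{lem:connection}. In Case~2, $r=[m,2]$: since $l_1=m$ and $t\ge 2$ we have $s=[m,l_2,\dots,l_t]\in(1/(m+1),1/m)$, which is disjoint from $I_1(r)\cup I_2(r)=[0,1/(m+1)]\cup[1/m,1]$, a contradiction. In Case~1 ($m_2=1$, so $k\ge 3$) and in Case~3 ($m_2\ge 2$; this covers both $k=2$, $m_2\ge 3$ and $k\ge 3$, $m_2\ge 2$), one first uses \cite[Corollary~3.14]{lee_sakuma_9} and \cite[Lemma~3.5]{lee_sakuma_9} to fix $l_2$ (namely $l_2=1$ in Case~1 and $l_2\ge 2$ in Case~3), hence to identify $\tilde s$, and then one uses \cite[Lemma~3.12(1),(2),(4)]{lee_sakuma_9} to translate the occurrences of $S_1$ and of $S_2$ in $CS(s)$ into occurrences of $T_1$ and of $T_2$ in $CS(\tilde s)$, where $S(\tilde r)=(T_1,T_2,T_1,T_2)$. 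The parity matching of $S_1,S_2$ with $T_1,T_2$ and the vanishing of the spurious shifts $\ell',\ell''$ arising in \cite[Lemma~3.12(2),(4)]{lee_sakuma_9} (which is forced because $T_1$ begins and ends with $m_3+1$, respectively with $m_2$, while $CS(\tilde s)$ is a sequence in $\{l_1(\tilde s),l_1(\tilde s)+1\}$) are carried out exactly as in the proof of Lemma~\ref{lem:connection}. When $\tilde r$ still has continued-fraction length $\ge 2$, one invokes the induction hypothesis applied to $(\tilde r,\tilde s)$.

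It remains to treat the sub-cases in which $\tilde r=1/N$ (namely $k=3$ in Case~1, with $N=m_3$, and $k=2$, $m_2\ge 3$ in Case~3, with $N=m_2-1$). Here the induction hypothesis is unavailable, since $r=1/N$ is excluded from our standing hypotheses and $S(1/N)$ is not among our objects; instead one argues as in Cases~1.a and~3.a of the proof of Lemma~\ref{lem:connection}, reducing the hypothesis to the assertion that $CS(\tilde s)$ contains a component too large to occur for any rational $\tilde s\in I_1(1/N)\cup I_2(1/N)=\{0\}\cup[1/(N-1),1]$ (for which every component of $CS(\tilde s)$ is at most $N-1$). This is the undecorated analogue of \cite[Lemma~5.1]{lee_sakuma_9}, and getting it sharp enough — in particular matching the flanking letters of the $S_1,S_2$ pattern against the contraction $CS(s)\to CS(\tilde s)$ — is the second point deserving care. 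Granting the descent property and this base estimate, the induction runs exactly as in Lemma~\ref{lem:connection}. Thus the real work is the combinatorial bookkeeping for the undecorated intervals, namely the descent property and the $1/N$ base estimate; no idea beyond those in the proof of Lemma~\ref{lem:connection} is needed.
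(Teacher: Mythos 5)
Your overall strategy is workable, but it is not the route the paper takes: the paper disposes of this lemma in two lines, observing that for $s\neq 0$ the statement is precisely Proposition~3.19(1) of \cite{lee_sakuma_2} (the earlier paper in this series on $2$-bridge link complements, where the intervals $I_1(r)$, $I_2(r)$ and the group $\RGPP{r}$ are the native objects), while for $s=0$ it follows from $CS(u_0)=\lp 2 \rp$. What you have written is, in effect, an outline of how that cited proposition is proved --- the same three-case induction on $k$ that drives Lemma~\ref{lem:connection}. Your organization is correct (the direct interval computation ruling out $r=[m,2]$ is right, as is the opening reduction to $s\neq 0$, $l_1=m$, and the dichotomy $l_2=1$ versus $l_2\ge 2$ via Corollary~3.14 of \cite{lee_sakuma_9}), and the two obligations you flag --- the descent $s\in I_1(r)\cup I_2(r)\Rightarrow\tilde s\in I_1(\tilde r)\cup I_2(\tilde r)$, and the base case $\tilde r=1/N$ --- are indeed where the content lies. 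The trade-off is clear: the citation is instant but opaque, whereas your reconstruction is self-contained but must re-earn everything the cited result already contains.

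That said, you have named your two key obligations rather than discharged them, and there is a third point you treat too lightly. In Lemma~\ref{lem:connection} the pattern being pushed down is the long rigid concatenation $((2n-2)\langle S_1,S_2\rangle,S_1)$, whose interior contains unperturbed copies of $T_1$ and $T_2$, and it is those honest interior components that pin down $l_1(\tilde s)$ and hence force $\ell'=\ell''=0$. In your setting the hypotheses are two possibly isolated, possibly distant occurrences of $S_1$ and of $S_2$; your parenthetical argument that the shifts vanish ``because $CS(\tilde s)$ is a sequence in $\{l_1(\tilde s),l_1(\tilde s)+1\}$'' presupposes that $l_1(\tilde s)$ equals $m_3$ (resp.\ $m_2-1$), which is exactly what a short occurrence with both ends perturbed does not immediately give you (for instance when $T_1$ and $T_2$ are single letters). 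This can be repaired, and is repaired in \cite{lee_sakuma_2}, but as written it is an assertion, not a proof. If you keep your route, either supply these verifications or, better, simply quote \cite[Proposition~3.19(1)]{lee_sakuma_2} and handle $s=0$ separately as the paper does.
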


\begin{proof}
The assertion for the case when $s\ne 0$ is nothing other than
\cite[Proposition~3.19(1)]{lee_sakuma_2},
while the assertion for the case $s=0$ follows from the fact that
$CS(u_0)=\lp 2 \rp$ (see \cite[Remark~3.2]{lee_sakuma_9}).
\end{proof}

\subsection{The case when $s \in I_1(r;n) \setminus I_1(r)$
provided $k$ is even, and $s \in I_2(r;n) \setminus I_2(r)$
provided $k$ is odd}

In this subsection, we investigate an important property of $CS(s)$
for a rational number $s$ such that
\[
\begin{cases}
s \in I_1(r;n) \setminus I_1(r) & \text{if $k$ is even};\\
s \in I_2(r;n) \setminus I_2(r) & \text{if $k$ is odd}.
\end{cases}
\]

\begin{lemma}
\label{lem:outside_orbit2}
Let $S(r)=(S_1, S_2, S_1, S_2)$ be as in \cite[{\it Lemma}~3.9]{lee_sakuma_9}.
\begin{enumerate}[\indent \rm (1)]
\item If $k$ is even and $[m_1, \dots, m_k-1] < s \le [m_1, \dots, m_k-1,2]$,
then $CS(s)$ contains $(m+1, S_{2e}, S_1, S_2, S_1, S_{2b}, m+1)$ as a subsequence,
where $(m, S_{2e})=(S_{2b}, m)=S_2$.

\item If $k$ is odd and $[m_1, \dots, m_k-1,2] \le s < [m_1, \dots, m_k-1]$,
then $CS(s)$ contains $(m, S_{1e}, S_2, S_1, S_2, S_{1b}, m)$ as a subsequence,
where $(m+1, S_{1e})=(S_{1b}, m+1)=S_1$.
\end{enumerate}
\end{lemma}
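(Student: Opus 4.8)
The plan is to prove (1) and (2) simultaneously by induction on $k\ge 2$, using the same case division as in the proof of Lemma~\ref{lem:connection}: \textbf{Case 1}, $m_2=1$ (forcing $k\ge 3$); \textbf{Case 2}, $k=2$ and $m_2=2$; \textbf{Case 3}, the remaining cases ($k=2$ and $m_2\ge 3$, or $k\ge 3$ and $m_2\ge 2$). As there I write $m=m_1$ and recall from \cite[Lemma~3.9]{lee_sakuma_9} that $S_1$ begins and ends with $m+1$ and $S_2$ begins and ends with $m$, so that $S_{1b},S_{1e},S_{2b},S_{2e}$ are well defined. Observe first that the hypothesis in (1) says exactly $s\in I_1(r;n)\setminus I_1(r)$ and the hypothesis in (2) says exactly $s\in I_2(r;n)\setminus I_2(r)$; in particular $s\neq 0$, so $s$ admits a continued fraction expansion $s=[l_1,\dots,l_t]$ with $l_1=m$, and by comparing this expansion with the expansions of the two endpoints of the interval (using \cite[Lemma~3.5]{lee_sakuma_9}) one determines the first one or two components $l_i$ in terms of the $m_i$ in each case; this is routine continued-fraction bookkeeping.

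For the base cases I argue directly. In \textbf{Case 2}, $r=[m,2]$, so $S_1=(m+1)$ and $S_2=(m)$ by \cite[Lemma~3.12(3)]{lee_sakuma_9}, the truncations $S_{2e},S_{2b}$ are empty, and the target is $(m+1,m+1,m,m+1,m+1)$; the interval forces $s=[m,1,l_3,\dots,l_t]$ with $t\ge 3$ and $l_3\ge 2$, and \cite[Corollary~3.14(1)]{lee_sakuma_9} applied to $s$ shows that $(m+1,m+1)$ occurs inside an $S_1$-block of the decomposition of $CS(s)$ given by \cite[Lemma~3.9]{lee_sakuma_9}; reading $(m+1,m+1)$ off two successive $S_1$-blocks, separated by an $m$ drawn from an $S_2$-block, produces the desired subsequence. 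The only other base cases are the instances of Cases 1 and 3 in which the reduction below yields a one-term continued fraction, namely $\tilde r=[m_3]=1/m_3$ (when $k=3$, $m_2=1$) and $\tilde r=[m_2-1]=1/(m_2-1)$ (when $k=2$, $m_2\ge 3$); for these $\tilde s$ lies in an explicit subinterval of $I_1(\tilde r;n)\cup I_2(\tilde r;n)$, and the required subsequence of $CT(s)=CS(\tilde s)$ follows from the corresponding analysis of torus-link slopes in the prequel (equivalently, from a direct computation with the cyclic word $u_{1/q}$), after which one returns to $CS(s)$ via \cite[Lemma~3.12]{lee_sakuma_9} exactly as in the inductive step.

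For the inductive step I pass to $\tilde r,\tilde s$ of \cite[Lemma~3.8]{lee_sakuma_9} (so $CS(\tilde r)=CT(r)$ and $CS(\tilde s)=CT(s)$), let $S(\tilde r)=(T_1,T_2,T_1,T_2)$ be the decomposition of \cite[Lemma~3.9]{lee_sakuma_9}, and invoke \cite[Corollary~3.14]{lee_sakuma_9} together with \cite[Lemma~3.5]{lee_sakuma_9}: in Case 1 this gives $l_2=1$, $\tilde r=[m_3,\dots,m_k]$, $\tilde s=[l_3,\dots,l_t]$, and in Case 3 it gives $l_2\ge 2$, $\tilde r=[m_2-1,m_3,\dots,m_k]$, $\tilde s=[l_2-1,l_3,\dots,l_t]$. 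Two things remain to be checked. First, that the interval hypothesis on $s$ relative to $r$ descends to the analogous hypothesis on $\tilde s$ relative to $\tilde r$ --- with the roles of (1) and (2) interchanged precisely when the length of the continued fraction drops by one (the Case 3 reductions) and preserved when it drops by two (the Case 1 reductions); this is the continued-fraction computation refining the implication ``$s\in I_1(r;n)\cup I_2(r;n)\Rightarrow\tilde s\in I_1(\tilde r;n)\cup I_2(\tilde r;n)$'' used in Lemma~\ref{lem:connection}, carried out now with the help of the explicit formulas for $\hat r_1,\hat r_2,r_1,r_2$ so as to keep track of the ``$\setminus I_i$'' part. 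Second, that once the induction hypothesis furnishes the corresponding long subsequence of $CS(\tilde s)=CT(s)$ in the letters $T_1,T_2$, it can be read back as a subsequence of $CS(s)$ via \cite[Lemma~3.12]{lee_sakuma_9} --- part (2) in the Case 1 reductions and part (4) in the Case 3 reductions --- which expresses $S_1$ and $S_2$, hence the target words $(m+1,S_{2e},S_1,S_2,S_1,S_{2b},m+1)$ and $(m,S_{1e},S_2,S_1,S_2,S_{1b},m)$, in terms of $T_1,T_2$; the ``error terms'' $\ell',\ell''$ produced by \cite[Lemma~3.12]{lee_sakuma_9} are forced to vanish because $t_1=t_{s_1}=\tilde m+1$ by \cite[Lemma~3.9]{lee_sakuma_9} (with $(t_1,\dots,t_{s_1})=T_1$ and $\tilde m$ the first component of $\tilde r$), just as in Cases 1.b and 3.c of Lemma~\ref{lem:connection}.

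The step I expect to be the main obstacle is this transport of the subsequence: verifying that the three consecutive middle blocks $S_1,S_2,S_1$ (resp.\ $S_2,S_1,S_2$), the truncated blocks $S_{2e},S_{2b}$ (resp.\ $S_{1e},S_{1b}$), and the two flanking single letters $m+1$ (resp.\ $m$) all line up correctly with the images of the $T_i$-blocks under \cite[Lemma~3.12]{lee_sakuma_9}, together with the parallel bookkeeping showing that the interval hypothesis descends with the correct parity swap. Apart from this, the argument is a faithful adaptation of the scheme already used for Lemma~\ref{lem:connection}, run in the constructive direction instead of by contradiction.
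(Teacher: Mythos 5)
Your plan is correct and follows essentially the same route as the paper: simultaneous induction on $k$ with the same three-case division, reduction to $\tilde r,\tilde s$ via $CT$, transport back through \cite[Lemma~3.12]{lee_sakuma_9} with the parity of $k$ governing whether parts (1) and (2) swap, and base cases settled by the prequel's torus-link slope analysis (the paper cites \cite[Lemma~5.5]{lee_sakuma_9} where you refer to it generically) and a direct computation for $r=[m,2]$. The only cosmetic difference is in Case 2, where the paper reads the two adjacent runs of $(m+1)$'s directly off the fact that $CT(s)=CS(\tilde s)$ contains $(l_3,l_3)$ or $(l_3,l_3+1)$ with $l_3\ge 2$, rather than via \cite[Corollary~3.14(1)]{lee_sakuma_9}.
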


\begin{proof}
We prove (1) and (2) simultaneously by induction on $k \ge 2$.
Let $s$ satisfy
\[
\begin{cases}
[m_1, \dots, m_k-1] < s \le [m_1, \dots, m_k-1,2] & \textrm{if $k$ is even;}\\
[m_1, \dots, m_k-1,2] \le s < [m_1, \dots, m_k-1] & \textrm{if $k$ is odd.}
\end{cases}
\]
Write $s$ as a continued fraction expansion $s=[l_1, \dots, l_t]$,
where $t \ge 1$, $(l_1, \dots, l_t) \in (\ZZ_+)^t$ and $l_t \ge 2$.
Then we have
$t \ge k+1$, $l_1=m_1, \dots, l_{k-1}=m_{k-1}, l_k=m_k-1$ and $l_{k+1}\ge 2$.

Throughout the proof, denote by $\tilde{r}$ and $\tilde{s}$
the rational numbers defined as in \cite[Lemma~3.8]{lee_sakuma_9}
for the rational numbers $r$ and $s$,
so that $CS(\tilde{r})=CT(r)$ and $CS(\tilde{s})=CT(s)$.

We consider three cases separately.

\medskip
\noindent {\bf Case 1.} $m_2=1$.
\medskip

In this case, $k \ge 3$, $l_2=m_2=1$ and $t \ge 4$.
So we have
\[
\tilde{r}=[m_3, \dots, m_k] \quad \text{\rm and} \quad
\tilde{s}=[l_3, \dots, l_t].
\]
It follows from the assumption that
\[
\begin{cases}
[m_3, \dots, m_k-1] < \tilde s \le [m_3, \dots, m_k-1,2] & \textrm{if $k$ is even;}\\
[m_3, \dots, m_k-1,2] \le \tilde s < [m_3, \dots, m_k-1] & \textrm{if $k$ is odd.}
\end{cases}
\]
This enables us to use the induction hypothesis.
At this point, we divide this case into three subcases.

\medskip
\noindent {\bf Case 1.a.} $k=3$.
\medskip

Since $[m_3-1,2] \le \tilde{s} < [m_3-1]$,
we see by \cite[Lemma~5.5]{lee_sakuma_9} that
$CS(\tilde{s})$ contains $(m_3-1, m_3, m_3-1)$ as a subsequence.
Since $CT(s)=CS(\tilde{s})$, this implies that
$CS(s)$ contains a subsequence
\[
(m, (m_3-1) \langle m+1 \rangle, m,
m_3 \langle m+1 \rangle, m, (m_3-1) \langle m+1 \rangle, m).
\]
Since $S_1 =(m_3\langle m+1 \rangle)$ and $S_2 =(m)$ by \cite[Lemma~3.12(1)]{lee_sakuma_9},
$CS(s)$ contains $(m, S_{1e}, S_2, S_1, S_2, S_{1b}, m)$ as a subsequence,
where $(m+1, S_{1e})=(S_{1b}, m+1)=S_1$.
So the assertion holds.

\medskip
\noindent {\bf Case 1.b.} $k\ge 4$ is even.
\medskip

Let $S(\tilde{r})=(T_1, T_2, T_1, T_2)$ be the decomposition of $S(\tilde{r})$
given by \cite[Lemma~3.9]{lee_sakuma_9}.
Since $[m_3, \dots, m_k-1] < \tilde{s} \le [m_3, \dots, m_k-1,2]$,
by the induction hypothesis
$CS(\tilde{s})$ contains
$(m_3+1, T_{2e}, T_1, T_2, T_1, T_{2b}, m_3+1)$ as a subsequence,
where $(m_3, T_{2e})=(T_{2b}, m_3)=T_2$.
Since $CS(\tilde{s})=CT(s)$, we see
by using \cite[Lemma~3.12(2)]{lee_sakuma_9}
that $CS(s)$ contains
$(m+1, S_{2e}, S_1, S_2, S_1, S_{2b}, m+1)$ as a subsequence,
where $(m, S_{2e})=(S_{2b}, m)=S_2$.

\medskip
\noindent {\bf Case 1.c.} $k\ge 4$ is odd.
\medskip

Let $S(\tilde{r})=(T_1, T_2, T_1, T_2)$ be the decomposition of $S(\tilde{r})$
given by \cite[Lemma~3.9]{lee_sakuma_9}.
Then, by the induction hypothesis,
$CS(\tilde{s})$ contains
$(m_3, T_{1e}, T_2, T_1, T_2, T_{1b}, m_3)$ as a subsequence,
where $(m_3+1, T_{1e})=(T_{1b}, m_3+1)=T_1$.
Since $CS(\tilde{s})=CT(s)$, we see
by using \cite[Lemma~3.12(2)]{lee_sakuma_9} that
$CS(s)$ contains
$(m, S_{1e}, S_2, S_1, S_2, S_{1b}, m)$ as a subsequence,
where $(m+1, S_{1e})=(S_{1b}, m+1)=S_1$.

\medskip
\noindent {\bf Case 2.} $k=2$ and $m_2=2$.
\medskip

In this case, $r=[m,2]$ and $[m+1] < s \le [m, 1, 2]$.
Then for $s=[l_1, \dots, l_t]$, we have $t \ge 3$, $l_1=m$, $l_2=1$ and $l_3 \ge 2$,
so that $\tilde{s}=[l_3,\dots,l_t]$ with $t \ge 3$ and $l_3 \ge 2$.
Hence $CS(\tilde{s})=CT(s)$ contains $(l_3, l_3)$ or $(l_3, l_3+1)$ as a subsequence.
Since $l_3 \ge 2$, this implies that $CS(s)$ contains $(m+1, m+1, m, m+1, m+1)$ as a subsequence.
Since $S_1=(m+1)$ and $S_2=(m)$ by \cite[Lemma~3.12(3)]{lee_sakuma_9}
and hence $S_{2e}=S_{2b}=\emptyset$,
$CS(s)$ contains a subsequence
$(m+1, S_{2e}, S_1, S_2, S_1, S_{2b}, m+1)$.
So the assertion holds.

\medskip
\noindent {\bf Case 3.} Either both $k=2$ and $m_2 \ge 3$ or both $k \ge 3$ and $m_2 \ge 2$.
\medskip

In this case, $l_2 \ge 2$.
So we have
\[
\tilde{r}=[m_2-1, \dots, m_k] \quad \text{\rm and} \quad
\tilde{s}=[l_2-1, \dots, l_t].
\]
It follows from the assumption that
\[
\begin{cases}
[m_2-1, m_3, \dots, m_k-1, 2] \le \tilde{s} < [m_2-1, m_3, \dots, m_k-1] & \textrm{if $k$ is even};\\
[m_2-1, m_3, \dots, m_k-1] < \tilde{s} \le [m_2-1, m_3, \dots, m_k-1, 2] & \textrm{if $k$ is odd}.
\end{cases}
\]
This enables us to use the induction hypothesis.
At this point, we divide this case into three subcases.

\medskip
\noindent {\bf Case 3.a.} $k=2$ and $m_2 \ge 3$.
\medskip

In this case, $\tilde{r}=[m_2-1]$.
Since $[m_2-2,2] \le \tilde{s} < [m_2-2]$,
we see by \cite[Lemma~5.5]{lee_sakuma_9} that
$CS(\tilde{s})$ contains $(m_2-2, m_2-1, m_2-2)$ as a subsequence.
Since $CT(s)=CS(\tilde{s})$, this implies that
$CS(s)$ contains
\[
(m+1, (m_2-2) \langle m \rangle, m+1, (m_2-1) \langle m \rangle, m+1, (m_2-2) \langle m \rangle, m+1),
\]
as a subsequence.
Since $S_1=(m+1)$ and $S_2=((m_2-1)\langle m \rangle)$ by \cite[Lemma~3.12(3)]{lee_sakuma_9},
$CS(s)$ contains a subsequence
$(m+1, S_{2e}, S_1, S_2, S_1, S_{2b}, m+1)$,
where $(m, S_{2e})=(S_{2b}, m)=S_2$.
Hence the assertion holds.

\medskip
\noindent {\bf Case 3.b.} $k \ge 3$ is even and $m_2 \ge 2$.
\medskip

Let $S(\tilde{r})= (T_1, T_2, T_1, T_2)$ be
the decomposition of $S(\tilde{r})$ given by \cite[Lemma~3.9]{lee_sakuma_9}.
Then, by the induction hypothesis,
$CS(\tilde{s})$ contains
$(m_2-1, T_{1e}, T_2, T_1, T_2, T_{1b}, m_2-1)$ as a subsequence,
where $(m_2, T_{1e})=(T_{1b}, m_2)=T_1$.
Since $CS(\tilde{s})=CT(s)$,
we see by using \cite[Lemma~3.12(4)]{lee_sakuma_9} that
$CS(s)$ contains $(m+1, S_{2e}, S_1, S_2, S_1, S_{2b}, m+1)$ as a subsequence,
where $(m, S_{2e})=(S_{2b}, m)=S_2$.

\medskip
\noindent {\bf Case 3.c.} $k \ge 3$ is odd and $m_2 \ge 2$.
\medskip

Let $S(\tilde{r})=(T_1, T_2, T_1, T_2)$ be
the decomposition of $S(\tilde{r})$ given by \cite[Lemma~3.9]{lee_sakuma_9}.
Then, by the induction hypothesis,
$CS(\tilde{s})$ contains
$(m_2, T_{2e}, T_1, T_2, T_1, T_{2b}, m_2)$ as a subsequence,
where $(m_2-1, T_{2e})=(T_{2b}, m_2-1)=T_2$.
Since $CS(\tilde{s})=CT(s)$, we see
by using \cite[Lemma~3.12(4)]{lee_sakuma_9} that
$CS(s)$ contains
$(m, S_{1e}, S_2, S_1, S_2, S_{1b}, m)$ as a subsequence,
where $(m+1, S_{1e})=(S_{1b}, m+1)=S_1$.

The proof of Lemma~\ref{lem:outside_orbit2} is now completed.
\end{proof}

\subsection{The case when $s \in I_2(r;n) \setminus I_2(r)$
provided $k$ is even, and $s \in I_1(r;n) \setminus I_1(r)$
provided $k$ is odd}

Finally, we investigate an important property of $CS(s)$
for a rational number $s$ such that
\[
\begin{cases}
s \in I_2(r;n) \setminus I_2(r) & \text{if $k$ is even};\\
s \in I_1(r;n) \setminus I_1(r) & \text{if $k$ is odd}.
\end{cases}
\]

\begin{lemma}
\label{lem:outside_orbit}
Let $S(r)=(S_1, S_2, S_1, S_2)$ be as in \cite[{\it Lemma}~3.9]{lee_sakuma_9}.
\begin{enumerate}[\indent \rm (1)]
\item If $k$ is even and $[m_1, \dots, m_k, 2n-2] < s < [m_1, \dots, m_{k-1}]$,
then $CS(s)$ contains $(m, S_{1e}, d \langle S_2, S_1 \rangle, S_2, S_{1b}, m)$ as a subsequence,
where $1 \le d \le 2n-3$ and $(m+1, S_{1e})=(S_{1b}, m+1)=S_1$.

\item If $k$ is odd and $[m_1, \dots, m_{k-1}] < s < [m_1, \dots, m_k, 2n-2]$,
then $CS(s)$ contains $(m+1, S_{2e}, d \langle S_1, S_2 \rangle, S_1, S_{2b}, m+1)$ as a subsequence,
where $1 \le d \le 2n-3$ and $(m, S_{2e})=(S_{2b}, m)=S_2$.
\end{enumerate}
\end{lemma}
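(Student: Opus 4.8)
The plan is to establish parts~(1) and~(2) simultaneously by induction on $k\ge 2$, following the template of the proofs of Lemmas~\ref{lem:connection} and~\ref{lem:outside_orbit2}. Write $m=m_1$, let $s=[l_1,\dots,l_t]$ with $(l_i)\in(\ZZ_+)^t$, $l_t\ge 2$, and let $\tilde r,\tilde s$ be the rationals of \cite[Lemma~3.8]{lee_sakuma_9}, so that $CS(\tilde r)=CT(r)$ and $CS(\tilde s)=CT(s)$. The first task is to read off the continued fraction of $s$ from the interval hypothesis. Using monotonicity of continued fractions and comparing $s$ with the two endpoints, one finds that $l_i=m_i$ for $i\le k-1$ and that exactly one of the following occurs: (a) $l_k=m_k$ together with $1\le l_{k+1}\le 2n-3$; or (b) $l_k\ge m_k+1$. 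In either case there is an integer $d$ with $1\le d\le 2n-3$ — essentially $d=l_{k+1}$ in case~(a) and $d=1$ in case~(b) — which will be the exponent $d$ appearing in the conclusion. The inequality $l_{k+1}\le 2n-3$ in case~(a) is precisely what bounds $d$ from above, and keeping this bound intact through the reduction $r\mapsto\tilde r$ is the heart of the induction.

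For the inductive step one splits into the three cases used before. If $m_2=1$ (so $k\ge 3$), then $l_2=1$, and $\tilde r=[m_3,\dots,m_k]$, $\tilde s=[l_3,\dots,l_t]$, where $\tilde s$ lies in the interval obtained from the given one by deleting $m_1,m_2$; since the length of the expansion decreases by $2$, the same part of the lemma applies to $\tilde r$, and one invokes the induction hypothesis when $k\ge 4$ and the counterpart of the present lemma for $K(1/m_3)$ established in \cite{lee_sakuma_9} when $k=3$. The resulting subsequence of $CS(\tilde s)$, written in terms of the blocks of $S(\tilde r)=(T_1,T_2,T_1,T_2)$ and of $d$, is then transported to the asserted subsequence of $CS(s)$ via \cite[Lemma~3.12(2)]{lee_sakuma_9}, which describes $S_1,S_2$ and the passage from $CS(\tilde s)=CT(s)$ to $CS(s)$ in terms of $T_1,T_2$ and the digit $m$; the exponent $d$ is carried over unchanged. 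If instead $m_2\ge 2$ with $k\ge 3$, or $m_2\ge 3$, then $l_2\ge 2$, $\tilde r=[m_2-1,m_3,\dots,m_k]$, $\tilde s=[l_2-1,l_3,\dots,l_t]$, and the length drops by $1$, so the \emph{other} part of the lemma (or, when $k=2$, the counterpart for $K(1/(m_2-1))$ from \cite{lee_sakuma_9}) governs $\tilde r$; one transports the resulting subsequence using \cite[Lemma~3.12(4)]{lee_sakuma_9}, and in the one subcase where that lemma introduces end-corrections $\ell',\ell''$, the fact that $T_1$ begins and ends with $m_2$ forces $\ell'=\ell''=0$, exactly as in the earlier proofs. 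The remaining base case $k=2$, $m_2=2$, namely $r=[m,2]$, is treated directly: here $S_1=(m+1)$, $S_2=(m)$ by \cite[Lemma~3.12(3)]{lee_sakuma_9} and $S_{1e}=S_{1b}=\emptyset$, the two shapes of $s$ make $CT(s)=CS(\tilde s)$, and hence $CS(s)$, explicit, and one reads off that it contains $(m,d\langle m,m+1\rangle,m,m)$ for a suitable $d\in\{1,\dots,2n-3\}$.

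The step I expect to be the main obstacle is the bookkeeping of the exponent $d$: one must check that the hypothesis on $s$ translates, under each of the two reductions, into an interval hypothesis on $\tilde s$ of the \emph{same} type with the \emph{same} bound $2n-3$ on the relevant partial quotient, so that the induction closes with $d$ still lying in $[1,2n-3]$; and one must verify that the two possible shapes of $s$ — the continued fraction of $s$ extending that of $r$ with a short next digit, versus the continued fraction of $s$ having a strictly larger digit in position $k$ — are handled uniformly by the applications of \cite[Lemma~3.12]{lee_sakuma_9}. Once these points are in place, the base cases $r=[m,2]$ and $r=1/p$ reduce to unwinding the known explicit descriptions of $S_1$, $S_2$, and $CS(s)$.
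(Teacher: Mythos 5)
Your proposal is correct and follows essentially the same route as the paper's proof: a simultaneous induction on $k$ with the case split $m_2=1$, ($k=2$, $m_2=2$), and the remaining case, passing to $\tilde r,\tilde s$ via \cite[Lemma~3.8]{lee_sakuma_9}, transporting the subsequence back through \cite[Lemma~3.12]{lee_sakuma_9}, and resting the base cases on the $r=1/p$ result (\cite[Lemma~5.4]{lee_sakuma_9}) and a direct computation for $r=[m,2]$. Your preliminary dichotomy on the continued fraction of $s$ (either $l_k=m_k$ with $1\le l_{k+1}\le 2n-3$, or $l_k\ge m_k+1$) is a correct reformulation of the interval hypothesis that the paper only makes explicit in the base cases, and your identification of the bookkeeping of $d$ and of the end-corrections in Lemma~3.12 as the delicate points matches exactly what the paper verifies (e.g.\ the ``$T$-sequence'' identity in its Case~1.b).
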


\begin{proof}
We prove (1) and (2) simultaneously by induction on $k \ge 2$.
Let $s$ satisfy
\[
\begin{cases}
[m_1, \dots, m_k, 2n-2] < s < [m_1, \dots, m_{k-1}] & \textrm{if $k$ is even;}\\
[m_1, \dots, m_{k-1}] < s < [m_1, \dots, m_k, 2n-2] & \textrm{if $k$ is odd.}
\end{cases}
\]
Write $s$ as a continued fraction expansion $s=[l_1, \dots, l_t]$, where
$(l_1, \dots, l_t) \in (\ZZ_+)^t$ and $l_t \ge 2$.
Then $l_1=m$.

Throughout the proof, denote by $\tilde{r}$ and $\tilde{s}$
the rational numbers defined as in \cite[Lemma~3.8]{lee_sakuma_9}
for the rational numbers $r$ and $s$,
so that $CS(\tilde{r})=CT(r)$ and $CS(\tilde{s})=CT(s)$.

We consider three cases separately.

\medskip
\noindent {\bf Case 1.} $m_2=1$.
\medskip

In this case, $k \ge 3$, $l_2=m_2=1$ and $t \ge 3$.
So we have
\[
\tilde{r}=[m_3, \dots, m_k] \quad \text{\rm and} \quad
\tilde{s}=[l_3, \dots, l_t].
\]
It follows from the assumption that
\[
\begin{cases}
[m_3, \dots, m_k, 2n-2] < \tilde{s} < [m_3, \dots, m_{k-1}] & \textrm{if $k$ is even;}\\
[m_3, \dots, m_{k-1}] < \tilde{s} < [m_3, \dots, m_k, 2n-2] & \textrm{if $k$ is odd.}
\end{cases}
\]
This enables us to use the induction hypothesis.
At this point, we divide this case into three subcases.

\medskip
\noindent {\bf Case 1.a.} $k=3$.
\medskip

Since $\tilde r=[m_3]$ and $0 < \tilde{s} < [m_3, 2n-2]$,
we see by \cite[Lemma~5.4]{lee_sakuma_9} that
$CS(\tilde{s})$ contains a subsequence
$(m_3+c, d \langle m_3 \rangle, m_3+c')$
for some $c, c'\ge 1$ and $0 \le d \le 2n-4$.
Since $CT(s)=CS(\tilde{s})$, this implies that
$CS(s)$ contains a subsequence
\[
((m_3+c)\langle m+1 \rangle, m, d \langle m_3 \langle m+1 \rangle, m \rangle,
(m_3+c') \langle m+1 \rangle),
\]
where $0 \le d \le 2n-4$.
In particular, $CS(s)$ contains a subsequence
\[
(m+1, m_3 \langle m+1 \rangle, m, d \langle m_3 \langle m+1 \rangle, m \rangle, m_3 \langle m+1 \rangle, m+1).
\]
Since $S_1 =(m_3\langle m+1 \rangle)$ and $S_2 =(m)$ by \cite[Lemma~3.12(1)]{lee_sakuma_9},
$CS(s)$ contains a subsequence
$(m+1, d' \langle S_1, S_2 \rangle, S_1, m+1)$,
where $d'=d+1$.
This implies the assertion, because
$S_2=(m)$ and $S_{2e}=S_{2b}=\emptyset$.

\medskip
\noindent {\bf Case 1.b.} $k\ge 4$ is even.
\medskip

Let $S(\tilde{r})=(T_1, T_2, T_1, T_2)$ be the decomposition of $S(\tilde{r})$
given by \cite[Lemma~3.9]{lee_sakuma_9}.
Then, by the induction hypothesis,
$CS(\tilde{s})$ contains
$(m_3, T_{1e}, d \langle T_2, T_1 \rangle, T_2, T_{1b}, m_3)$ as a subsequence,
where $1 \le d \le 2n-3$ and $(m_3+1, T_{1e})=(T_{1b}, m_3+1)=T_1$.
Since $CS(\tilde{s})=CT(s)$, we see by using \cite[Lemma~3.12(2)]{lee_sakuma_9}
that $CS(s)$ contains
$(m, S_{1e}, d \langle S_2, S_1 \rangle, S_2, S_{1b}, m)$ as a subsequence,
where $1 \le d \le 2n-3$ and $(m+1, S_{1e})=(S_{1b}, m+1)=S_1$.
In fact, we have the following identity under the notation of \cite[Lemma~3.12(2)]{lee_sakuma_9}:
\[
\begin{aligned}
(m, S_{1e})
&=(m, (t_1-1) \langle m+1 \rangle,  m, t_2 \langle m+1 \rangle,
\dots, t_{s_1-1}\langle m+1 \rangle, m, t_{s_1}\langle m+1 \rangle)\\
&=(m, m_3 \langle m+1 \rangle,  m, t_2 \langle m+1 \rangle,
\dots, t_{s_1-1}\langle m+1 \rangle, m, t_{s_1}\langle m+1 \rangle).
\end{aligned}
\]
Thus the ``$T$-sequence'' of $(m, S_{1e})$ is $(m_3, T_{1e})$.
Similarly, the ``$T$-sequence'' of $(S_{1b}, m)$ is $(T_{1b},m_3)$.
By using these facts, we can confirm the assertion above.

\medskip
\noindent {\bf Case 1.c.} $k\ge 4$ is odd.
\medskip

Let $S(\tilde{r})=(T_1, T_2, T_1, T_2)$ be the decomposition of $S(\tilde{r})$
given by \cite[Lemma~3.9]{lee_sakuma_9}.
Then, by the induction hypothesis,
$CS(\tilde{s})$ contains
$(m_3+1, T_{2e}, d \langle T_1, T_2 \rangle, T_1, T_{2b}, m_3+1)$ as a subsequence,
where $1 \le d \le 2n-3$ and $(m_3, T_{2e})=(T_{2b}, m_3)=T_2$.
Since $CS(\tilde{s})=CT(s)$, we see
by using \cite[Lemma~3.12(2)]{lee_sakuma_9} that
$CS(s)$ contains
$(m+1, S_{2e}, d \langle S_1, S_2 \rangle, S_1, S_{2b}, m+1)$ as a subsequence,
where $1 \le d \le 2n-3$ and $(m, S_{2e})=(S_{2b}, m)=S_2$.

\medskip
\noindent {\bf Case 2.} $k=2$ and $m_2=2$.
\medskip

In this case, $r=[m,2]$ and $[m, 2, 2n-2] < s < [m]$.
Then one of the following holds for $s=[l_1, \dots, l_t]$.

\begin{enumerate}[\indent \rm (i)]
\item $t \ge 3$, $l_1=m$, $l_2=2$ and $l_3 \le 2n-3$; or

\item $t \ge 2$, $l_1=m$ and $l_2 \ge 3$.
\end{enumerate}
If (i) happens,
we claim that $CS(s)$ contains a subsequence
$(2 \langle m \rangle, m+1, d \langle m, m+1 \rangle, 2 \langle m \rangle)$,
where $0 \le d \le 2n-4$.
Clearly ${\tilde s}=[1, l_3, \dots, l_t]$.
Here, if $l_3=1$, then $t \ge 4$ and $CS({\tilde s})=CT(s)$ contains
a subsequence $(2,2)$. So $CS(s)$ contains a subsequence
$(2 \langle m \rangle, m+1, 2 \langle m \rangle)$
and therefore the claim holds with $d=0$.
Also if $l_3 \ge 2$, then $CS({\tilde s})=CT(s)$ contains
a subsequence $(2,(l_3-1)\langle 1 \rangle,2)$,
so that $CS(s)$ contains a subsequence
$(2 \langle m \rangle, m+1, (l_3-1) \langle m, m+1 \rangle, 2 \langle m \rangle)$
and therefore the claim holds with $d=l_3-1 \le 2n-4$.
Then, since $S_1=(m+1)$ and $S_2=(m)$, $CS(s)$ contains
$(m, d' \langle S_2, S_1 \rangle, S_2, m)$ as a subsequence,
where $d'=d+1$. Since $S_1=(m+1)$, $S_{1e}=S_{1b}=\emptyset$.
Hence the assertion holds.

On the other hand, if (ii) happens,
we claim that $CS(s)$ contains a subsequence
$((l_2-1) \langle m \rangle, m+1, (l_2-1) \langle m \rangle)$,
where $l_2-1 \ge 2$.
Clearly ${\tilde s}=[l_2-1, l_3, \dots, l_t]$.
Here, if either $t=2$ or $l_3 \ge 2$, then $CS({\tilde s})=CT(s)$ contains
a subsequence $(l_2-1,l_2-1)$, so that $CS(s)$ contains a subsequence
$((l_2-1) \langle m \rangle, m+1, (l_2-1) \langle m \rangle)$, as desired.
Also if $l_3=1$, then $t \ge 4$ and $CS({\tilde s})=CT(s)$ contains
a subsequence $(l_2, l_2)$.
Then $CS(s)$ contains a subsequence
$(m, (l_2-1) \langle m \rangle, m+1, (l_2-1) \langle m \rangle, m)$,
and therefore $CS(s)$ contains a subsequence
$((l_2-1) \langle m \rangle, m+1, (l_2-1) \langle m \rangle)$,
as desired.
Then, since $S_1=(m+1)$ and $S_2=(m)$,
$CS(s)$ contains $(m, S_2, S_1, S_2, m)$ as a subsequence,
so the assertion holds.

\medskip
\noindent {\bf Case 3.} Either both $k=2$ and $m_2 \ge 3$ or both $k \ge 3$ and $m_2 \ge 2$.
\medskip

In this case, $l_2 \ge 2$.
So we have
\[
\tilde{r}=[m_2-1, \dots, m_k] \quad \text{\rm and} \quad
\tilde{s}=[l_2-1, \dots, l_t].
\]
It follows from the assumption that
\[
\begin{cases}
[m_2-1, \dots, m_{k-1}] < \tilde{s} < [m_2-1, \dots, m_k, 2n-2] & \textrm{if $k$ is even};\\
[m_2-1, \dots, m_k, 2n-2] < \tilde{s} < [m_2-1, \dots, m_{k-1}] & \textrm{if $k$ is odd}.
\end{cases}
\]
This enables us to use the induction hypothesis.
At this point, we divide this case into three subcases.

\medskip
\noindent {\bf Case 3.a.} $k=2$ and $m_2 \ge 3$.
\medskip

Then $\tilde r=[m_2-1]$ and $0 < \tilde{s} < [m_2-1, 2n-2]$.
Hence we see by \cite[Lemma~5.4]{lee_sakuma_9} that
$CS(\tilde{s})$ contains a subsequence
$(m_2-1+c, d \langle m_2-1 \rangle, m_2-1+c')$
for some $c, c'\ge 1$ and $0 \le d \le 2n-4$.
Since $CT(s)=CS(\tilde{s})$, this implies that
$CS(s)$ contains a subsequence
\[
(m, (m_2-1) \langle m \rangle, m+1, d \langle (m_2-1) \langle m \rangle, m+1\rangle, (m_2-1) \langle m \rangle, m),
\]
where $0 \le d \le 2n-4$.
Since $S_1=(m+1)$ and $S_2=((m_2-1)\langle m \rangle)$ by \cite[Lemma~3.12(3)]{lee_sakuma_9},
$CS(s)$ contains a subsequence
$(m, d' \langle S_2, S_1 \rangle, S_2, m)$,
where $d'=d+1$.
Since $S_1=(m+1)$ and therefore
$S_{1e}=S_{1b}=\emptyset$,
the assertion holds.

\medskip
\noindent {\bf Case 3.b.} $k \ge 3$ is even and $m_2 \ge 2$.
\medskip

Let $S(\tilde{r})= (T_1, T_2, T_1, T_2)$ be
the decomposition of $S(\tilde{r})$ given by \cite[Lemma~3.9]{lee_sakuma_9}.
Then, by the induction hypothesis,
$CS(\tilde{s})$ contains
$(m_2, T_{2e}, d \langle T_1, T_2 \rangle, T_1, T_{2b}, m_2)$ as a subsequence,
where $1 \le d \le 2n-3$ and $(m_2-1, T_{2e})=(T_{2b}, m_2-1)=T_2$.
Since $CS(\tilde{s})=CT(s)$, we see by using \cite[Lemma~3.12(4)]{lee_sakuma_9} that
$CS(s)$ contains
$(m, S_{1e}, d \langle S_2, S_1 \rangle, S_2, S_{1b}, m)$ as a subsequence,
where $1 \le d \le 2n-3$ and $(m+1, S_{1e})=(S_{1b}, m+1)=S_1$.

\medskip
\noindent {\bf Case 3.c.} $k \ge 3$ is odd and $m_2 \ge 2$.
\medskip

Let $S(\tilde{r})= (T_1, T_2, T_1, T_2)$ be
the decomposition of $S(\tilde{r})$ given by \cite[Lemma~3.9]{lee_sakuma_9}.
Then, by the induction hypothesis,
$CS(\tilde{s})$ contains
$(m_2-1, T_{1e}, d \langle T_2, T_1 \rangle, T_2, T_{1b}, m_2-1)$ as a subsequence,
where $1 \le d \le 2n-3$ and $(m_2, T_{1e})=(T_{1b}, m_2)=T_1$.
Since $CS(\tilde{s})=CT(s)$, we see
by using \cite[Lemma~3.12(4)]{lee_sakuma_9}
that $CS(s)$ contains
$(m+1, S_{2e}, d \langle S_1, S_2 \rangle, S_1, S_{2b}, m+1)$ as a subsequence,
where $1 \le d \le 2n-3$ and $(m, S_{2e})=(S_{2b}, m)=S_2$.

The proof of Lemma~\ref{lem:outside_orbit} is now completed.
\end{proof}

\section{Proof of Main Theorem~\ref{thm:conjugacy}(1)}
\label{sec:proof of main theorem(1) for the general case}
Consider a Heckoid group $\Hecke(r;n)$,
where $r$ is a non-integral rational number and $n$ is an integer greater than $1$.
By \cite[Lemma~2.5]{lee_sakuma_9}, we may assume $0< r\le 1/2$.
Since we have already treated, in \cite{lee_sakuma_9},
the case where $r=1/p$ for some integer $p\ge 2$,
we may assume
$r=[m_1, \dots, m_k]$ with $m_1=m \ge 2$ and $k \ge 2$.
Let $s$ and $s'$ be distinct rational numbers in $I_1(r;n)\cup I_2(r;n)$.
Suppose on the contrary that the simple loops
$\alpha_s$ and $\alpha_{s'}$ are homotopic in $\orbs(r;n)$, i.e.,
$u_s$ and $u_{s'}^{\pm 1}$ are conjugate in $\Hecke(r;n)$.
By \cite[Lemma~4.11]{lee_sakuma_9}, there is a reduced nontrivial annular
diagram $M$ over $\Hecke(r;n)=\langle a, b \svert u_r^n \rangle$ with
$(\phi(\alpha)) \equiv (u_s)$ and $(\phi(\delta)) \equiv (u_{s'}^{\pm 1})$,
where $\alpha$ and $\delta$ are, respectively, outer and inner boundary cycles of $M$.
Since $s, s' \in I_1(r;n)\cup I_2(r;n)$,
we see by Lemma~\ref{lem:connection}
that $CS(\phi(\alpha))$ and $CS(\phi(\delta))$
do not contain $((2n-1) \langle S_1, S_2 \rangle)$ nor $((2n-1) \langle S_2, S_1 \rangle)$
as a subsequence.
So by \cite[Corollary~4.17]{lee_sakuma_9}, $M$ is shaped as
in \cite[Figure~3(a)]{lee_sakuma_9} or \cite[Figure~3(b)]{lee_sakuma_9}.

\begin{lemma}
\label{lem:claim1_II}
$M$ is shaped as in \cite[{\it Figure}~3(a)]{lee_sakuma_9}, that is,
$M$ satisfies the conclusion of \cite[{\it Corollary}~4.17(1)]{lee_sakuma_9}.
\end{lemma}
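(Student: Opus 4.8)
## Proof proposal for Lemma~\ref{lem:claim1_II}

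\begin{proof}[Proof proposal]
The plan is to rule out the alternative configuration, namely that $M$ is shaped as in \cite[Figure~3(b)]{lee_sakuma_9}, by deriving a contradiction with the structural properties of $CS(s)$ and $CS(s')$ established in Section~\ref{sec:technical_lemmas}. In the Figure~3(b) configuration, the annular diagram $M$ has a narrow ``band'' of regions wrapping once around the annulus, and the boundary labels $(u_s)$ and $(u_{s'})$ are forced to contain long subwords of the cyclic word $(u_r^{\pm n})$ that are products of many pieces. Concretely, I would first extract from \cite[Corollary~4.17(2)]{lee_sakuma_9} (the statement underlying Figure~3(b)) the precise combinatorial constraint: in that shape, one of the boundary cycles must read a subword of $(u_r^{\pm n})$ which is a product of at least $4n-1$ pieces but not of fewer. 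This is exactly the situation that Corollary~\ref{cor:consecutive_vertices} forbids, since $s, s' \in I_1(r;n) \cup I_2(r;n)$.

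The key steps, in order, are as follows. First I would recall the global setup already in place: $M$ is a reduced nontrivial annular diagram over $\Hecke(r;n) = \langle a, b \svert u_r^n \rangle$, the outer and inner boundary cycles $\alpha, \delta$ carry labels $(u_s)$ and $(u_{s'}^{\pm 1})$, and by Lemma~\ref{lem:connection} neither $CS(\phi(\alpha))$ nor $CS(\phi(\delta))$ contains $((2n-1)\langle S_1, S_2\rangle)$ or $((2n-1)\langle S_2, S_1\rangle)$ as a subsequence, so by \cite[Corollary~4.17]{lee_sakuma_9} only the two shapes Figure~3(a) and Figure~3(b) are possible. Second, assume for contradiction that $M$ has the Figure~3(b) shape. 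Third, analyze the region structure forced by Figure~3(b): trace the boundary of the annulus through the layers of $2$-cells and identify, along one of $\alpha$ or $\delta$, a subword $w$ of $(u_r^{\pm n})$ that is a product of $4n-1$ pieces but not of fewer — this uses the exterior arcs of the $2$-cells in the band being glued consecutively along that boundary component, together with the length/piece bookkeeping for $u_r^n$. Fourth, invoke Corollary~\ref{cor:consecutive_vertices} with the relevant one of $s$ or $s'$ (both lie in $I_1(r;n) \cup I_2(r;n)$): $(u_s)$, resp. $(u_{s'})$, cannot contain such a subword $w$. This contradiction forces $M$ into the Figure~3(a) shape, which is the assertion.

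I expect the main obstacle to be the third step: carefully reading off from the Figure~3(b) configuration the exact count of pieces in the forced boundary subword, and confirming that it is $4n-1$ (and not a product of fewer pieces). This requires tracking how the $2$-cells labeled by cyclic permutations of $u_r^{\pm n}$ are stacked in the band, how their shared edges constitute pieces, and how the boundary word of the annulus picks up exactly $4n-1$ of them without a shortening. The cyclic sequences $CS$ and the decomposition $S(r) = (S_1, S_2, S_1, S_2)$ enter here because the number of pieces is governed by the syllable structure of $u_r$, and one must check that the configuration does not collapse via the small cancellation hypotheses already verified for these presentations in \cite{lee_sakuma_9}. Once the count is pinned down, the appeal to Corollary~\ref{cor:consecutive_vertices} is immediate, and the remaining bookkeeping (that both boundary components satisfy the hypothesis, and that the $\pm 1$ on $u_{s'}$ does not affect the piece count since $(u_r^{-n})$ is just the reverse) is routine.
\end{proof}
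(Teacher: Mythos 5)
There is a genuine gap, and it sits exactly where you predicted the ``main obstacle'' would be: the piece count you need does not come out to $4n-1$. In the Figure~3(b) configuration, what \cite[Convention~4.7(3) and Theorem~4.15(4)]{lee_sakuma_9} actually force on the outer boundary label $(u_s)$ is a subword of $(u_r^{\pm n})$ that is a product of $4n-2$ pieces but not of fewer --- one piece short of the threshold in Corollary~\ref{cor:consecutive_vertices}. (Roughly, in the 3(b) shape each face contributes its boundary minus two interior edges to the outer boundary, and the bookkeeping lands at $4n-2$, not $4n-1$.) So the direct appeal to Corollary~\ref{cor:consecutive_vertices} fails, and your proposed contradiction never materializes. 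Corollary~\ref{cor:consecutive_vertices} is indeed used in this paper, but for Main Theorem~\ref{thm:conjugacy}(2) (peripherality), not here.

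The paper's actual argument has to work harder. It uses the $4n-2$-piece subword only to conclude, via \cite[Lemma~4.2(2c)]{lee_sakuma_9}, that $CS(s)$ contains both $S_1$ and $S_2$, hence $s \notin I_1(r)\cup I_2(r)$ by Lemma~\ref{lem:inside_orbit}. Then Lemmas~\ref{lem:outside_orbit2} and \ref{lem:outside_orbit} supply a specific subword $w$ of $(u_s)$ with $S(w)$ of one of four explicit forms. A separate claim (using the uniqueness of $S_1$ in $CS(r)=\lp S_1,S_2,S_1,S_2\rp$) shows some face $D$ in the outer boundary layer has $\phi(\partial D^+)$ entirely inside $w$; for that face, $S(\phi(\partial D^-))$ is forced to contain $(S_1,S_2,\ell)$, so $\phi(\partial D^-)$ cannot be a product of only $2$ pieces, contradicting the Figure~3(b) shape. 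Your high-level strategy (rule out 3(b) by a piece-count contradiction on a boundary label) is the right instinct, but the contradiction must be localized to a single face's inner arc rather than read off globally from the outer boundary, and it requires the interval dichotomy $I_i(r)$ versus $I_i(r;n)\setminus I_i(r)$ together with Lemmas~\ref{lem:outside_orbit2} and \ref{lem:outside_orbit}, none of which appear in your outline.
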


\begin{proof}
Suppose on the contrary that $M$ is shaped as in \cite[Figure~3(b)]{lee_sakuma_9}.
Then $(\phi(\alpha)) \equiv (u_s)$ contains a subword of the cyclic word
$(u_r^{\pm n})$ which is a product of $4n-2$ pieces,
but is not a product of less than $4n-2$ pieces
(see \cite[Convention~4.7(3) and Theorem~4.15(4)]{lee_sakuma_9}).
Since $4n-2 \ge 6$, this
together with \cite[Lemma~4.2(2c)]{lee_sakuma_9}
implies that $CS(\phi(\alpha))=CS(s)$ contains both $S_1$ and $S_2$ as subsequences.
So by Lemma~\ref{lem:inside_orbit},
$s \notin I_1(r) \cup I_2(r)$.
Then by Lemmas~\ref{lem:outside_orbit2}
and \ref{lem:outside_orbit},
$(u_s)$ contains a subword $w$ for which $S(w)$ is a subsequence of $CS(s)$ such that
\[
S(w)=
\begin{cases}
(m+1, S_{2e}, S_1, S_2, S_1, S_{2b}, m+1) & \text{if $k$ is even and $s \in I_1(r;n)$};\\
(m, S_{1e}, d \langle S_2, S_1 \rangle, S_2, S_{1b}, m) & \text{if $k$ is even and $s \in I_2(r;n)$};\\
(m, S_{1e}, S_2, S_1, S_2, S_{1b}, m) & \text{if $k$ is odd and $s \in I_2(r;n)$};\\
(m+1, S_{2e}, d \langle S_1, S_2 \rangle, S_1, S_{2b}, m+1) & \text{if $k$ is odd and $s \in I_1(r;n)$},
\end{cases}
\]
where $1 \le d \le 2n-3$, $(m+1, S_{1e})=(S_{1b}, m+1)=S_1$
and $(m, S_{2e})=(S_{2b}, m)=S_2$.

\medskip
\noindent {\bf Claim.} {\it There is a face $D$ in the outer boundary layer of $M$ such that
$\phi(\partial D^+)$ is a subword of $w$.}

\begin{proof}[Proof of Claim]
Suppose that there is no such face.
Then either (i) there is a face, $D$, in the outer boundary layer of $M$ such that
$\phi(\partial D^+)\equiv uwv$ for some words $u$ and $v$
such that at least one of them is nonempty, or
(ii) there are two successive faces, say $D_1$ and $D_2$,
in the outer boundary layer of $M$
such that $\phi(\partial D^+_1)\equiv uw_1$ and
$\phi(\partial D^+_2)\equiv w_2v$,
where $u$, $v$, $w_1$ and $w_2$ are nonempty words such that
$w\equiv w_1w_2$.

First assume that (i) holds.
If $S(w)$ is of the first or the last form, namely,
if $S(w)$ begins and ends with $m+1$,
then $S(w)$ is a subsequence of $CS(\phi(\partial D))=\lp 2n\langle S_1,S_2\rangle \rp$.
By the uniqueness of $S_1$ in $CS(r)=\lp S_1,S_2,S_1,S_2 \rp$
(see \cite[Lemma~3.9]{lee_sakuma_9}), this implies that
the first $S_1$ in $S(w)$ must coincide some $S_1$ in $\lp 2n\langle S_1,S_2\rangle \rp$.
But, then this implies that $S(w)$ cannot be a subsequence of
$\lp 2n\langle S_1,S_2\rangle \rp$, because $(m+1, S_{2e})\ne S_2$,
a contradiction.
If $S(w)$ is of the second or the third form, namely,
if $S(w)$ begins and ends with $m$,
then, by the uniqueness of $S_1$ in $CS(r)=\lp S_1,S_2,S_1,S_2 \rp$,
we see that $S(\phi(\partial D^-))$
is equal to $(1, (2n-d-2)\langle S_2,S_1\rangle, S_2, 1)$
or $(1, (2n-3)\langle S_2,S_1\rangle, S_2, 1)$ accordingly.
Since both $2n-d-2$ and $2n-3$ are at least $1$,
we see by \cite[Lemma~4.2(2)]{lee_sakuma_9} that
the word $\phi(\partial D^-)$ cannot be expressed as a product of
$2$ pieces of $(u_r^{\pm n})$,
contradicting \cite[Figure~3(b)]{lee_sakuma_9}
(cf. \cite[Corollary~4.17(2)]{lee_sakuma_9}).

Next assume that (ii) holds.
If $S(w)$ is of the first form,
namely, if $S(w)=(m+1, S_{2e}, S_1, S_2, S_1, S_{2b}, m+1)$,
then either the first $S_1$ in $S(w)$ is a subsequence of
$S(\phi(\partial D^+_1))$ or the last $S_1$ in $S(w)$ is a subsequence of
$S(\phi(\partial D^+_2))$.
In either case, we encounter a contradiction by an argument as in (i).
The other three forms of $S(w)$ are treated similarly.
\end{proof}

For such a face $D$ as in the statement of the above claim,
since $CS(\phi(\partial D))=\lp 2n \langle S_1, S_2 \rangle \rp$,
$S(\phi(\partial D^-))$ must contain
$(S_1, S_2, \ell)$ as a subsequence for some $\ell \in \ZZ_+$.
In more detail, if $S(w)$ is of the first or fourth form,
then since $\phi(\partial D^+)$ is a subword of $w$,
$S(\phi(\partial D^-))$ must contain $(S_1, S_2, S_1)$ as a subsequence.
On the other hand, if $S(w)$ is of the second or third form,
then $S(\phi(\partial D^-))$ must contain
$(\ell_1, S_2, S_1, S_2, \ell_2)$
as a subsequence for some $\ell_1, \ell_2 \in \ZZ_+$.
But then by \cite[Lemma~4.2(2)]{lee_sakuma_9},
the word $\phi(\partial D^-)$ cannot be expressed as a product of
$2$ pieces of $(u_r^{\pm n})$,
contradicting \cite[Figure~3(b)]{lee_sakuma_9}
(cf. \cite[Corollary~4.17(2)]{lee_sakuma_9}).
\end{proof}

\begin{lemma}
\label{lem:claim2_II}
$s, s' \notin I_1(r) \cup I_2(r)$.
\end{lemma}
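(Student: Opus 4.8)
The plan is to argue by contradiction, assuming that one of $s,s'$—say $s$—lies in $I_1(r)\cup I_2(r)$, and to exploit the fact, just established in Lemma~\ref{lem:claim1_II}, that $M$ is shaped as in \cite[Figure~3(a)]{lee_sakuma_9}. In that shape the outer boundary cycle $\alpha$ reads off a subword of the cyclic word $(u_r^{\pm n})$ that is a product of $4n-1$ pieces but not a product of fewer; this is exactly the configuration ruled out by Corollary~\ref{cor:consecutive_vertices} for rational numbers in $I_1(r;n)\cup I_2(r;n)$. So the first step is to recall precisely what \cite[Corollary~4.17(1)]{lee_sakuma_9} says about the combinatorics of $M$ when it is shaped as in Figure~3(a), and to translate that into a statement about a long subword $w$ of $(u_s)$ whose associated $S$-sequence $S(w)$ contains many consecutive blocks from $CS(r)=\lp S_1,S_2,S_1,S_2\rp$.

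First I would note that, since $s\in I_1(r;n)\cup I_2(r;n)$ in any case, Corollary~\ref{cor:consecutive_vertices} already forbids $(u_s)$ from containing a subword of $(u_r^{\pm n})$ that is a product of exactly $4n-1$ pieces and no fewer. Hence the contradiction will not come from $s\in I_1(r;n)\cup I_2(r;n)$ alone—that is already incorporated into the hypothesis—so the extra information $s\in I_1(r)\cup I_2(r)$ must be used through Lemma~\ref{lem:inside_orbit}: for such $s$, at least one of $S_1$, $S_2$ does not occur in $CS(s)$ as a subsequence. The strategy is then to show that the Figure~3(a) shape of $M$ forces $CS(\phi(\alpha))=CS(s)$ to contain \emph{both} $S_1$ and $S_2$ as subsequences, contradicting Lemma~\ref{lem:inside_orbit}. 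This is the same mechanism as in the proof of Lemma~\ref{lem:claim1_II}: a face $D$ in the outer boundary layer has $CS(\phi(\partial D))=\lp 2n\langle S_1,S_2\rangle\rp$, and if $\phi(\partial D^+)$ is long enough—a product of at least, say, $6$ pieces—then by \cite[Lemma~4.2(2c)]{lee_sakuma_9} it must contain a full $S_1$ and a full $S_2$ as subsequences, and these persist in $CS(\phi(\alpha))$.

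Concretely, the key steps in order would be: (i) invoke \cite[Corollary~4.17(1)]{lee_sakuma_9} to describe the outer boundary layer of $M$ in its Figure~3(a) shape, in particular that some face $D$ of that layer contributes to $\alpha$ a subword $\phi(\partial D^+)$ which is a product of at least $4n-2\ (\ge 6)$ pieces; (ii) apply \cite[Lemma~4.2(2c)]{lee_sakuma_9} to conclude that $S(\phi(\partial D^+))$, hence $CS(s)$, contains both $S_1$ and $S_2$ as subsequences; (iii) apply Lemma~\ref{lem:inside_orbit} to deduce $s\notin I_1(r)\cup I_2(r)$; and (iv) run the identical argument for $s'$ in place of $s$, using $(\phi(\delta))\equiv(u_{s'}^{\pm1})$ and the description of the inner boundary layer in Figure~3(a), to get $s'\notin I_1(r)\cup I_2(r)$ as well. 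The main obstacle I anticipate is bookkeeping in step (i)–(ii): I must check that the Figure~3(a) shape really does guarantee an outer-layer face whose $+$-side contribution to $\alpha$ is a product of at least six pieces—this needs a careful reading of \cite[Theorem~4.15]{lee_sakuma_9} and the convention on how boundary-layer faces partition $\alpha$, together with the observation that $n\ge 2$ makes $2n\langle S_1,S_2\rangle$ long enough that a "sufficiently long" piece of it must contain a whole $S_1$ and a whole $S_2$. The inner-boundary case in step (iv) is symmetric but one should double-check that the corollary's statement about Figure~3(a) is symmetric in the two boundary cycles, or else argue it directly from $CS(\phi(\delta))$ not containing $((2n-1)\langle S_1,S_2\rangle)$ or $((2n-1)\langle S_2,S_1\rangle)$ as a subsequence.
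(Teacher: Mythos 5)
Your overall target---a contradiction with Lemma~\ref{lem:inside_orbit} obtained by showing that $CS(s)$ and $CS(s')$ must contain both $S_1$ and $S_2$---is the right one, but the route you propose has a genuine gap at step (i). You claim that the Figure~3(a) shape of $M$ guarantees an outer-layer face $D$ whose contribution $\phi(\partial D^+)$ to $\alpha$ is a product of at least $4n-2\ (\ge 6)$ pieces. That is precisely the defining feature of the Figure~3(b) configuration, which Lemma~\ref{lem:claim1_II} has just excluded; it is not a feature of Figure~3(a). In the Figure~3(a) shape the faces form a single layer meeting both boundary components, and the split of each $\partial D_i$ between $\alpha$ and $\delta$ can be arbitrarily unbalanced: a face may contribute only a single letter to the outer boundary (this is exploited later in the paper, e.g.\ in the proof of Lemma~\ref{lem:twist_link}, where a face has $S(\phi(\partial D_2^+))=(m+1)$ and correspondingly $S(\phi(\partial D_2^-))=(m,(2n-1)\langle m+1,m\rangle)$). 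So no single face need contribute to $\alpha$ a subword long enough to force both $S_1$ and $S_2$ into $CS(s)$, and the symmetric argument you outline for $s$ and $s'$ does not get off the ground. The check you yourself flag as the ``main obstacle'' in steps (i)--(ii) in fact fails.

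The paper's proof is asymmetric and uses the hypothesis $s\in I_1(r)\cup I_2(r)$ as input rather than contradicting it immediately. Assuming (without loss of generality) $s\in I_1(r)\cup I_2(r)$, Lemma~\ref{lem:inside_orbit} says $CS(s)$ misses $S_1$ or $S_2$; this deficiency of the outer contributions $\phi(\partial D_i^+)$ forces the inner contributions $\phi(\partial D_i^-)$ to be long, so $CS(\phi(\delta))=CS(s')$ contains both $S_1$ and $S_2$, whence $s'\in(I_1(r;n)\setminus I_1(r))\cup(I_2(r;n)\setminus I_2(r))$. Now Lemmas~\ref{lem:outside_orbit2} and~\ref{lem:outside_orbit} apply to $s'$ and produce a distinguished subword $w$ of $(u_{s'})$ of the same form as in the proof of Lemma~\ref{lem:claim1_II}; running the Claim argument from that proof with the roles of the two boundaries exchanged then shows that $CS(\phi(\alpha))=CS(s)$ must contain $(S_1,S_2,\ell)$ as a subsequence for some $\ell\in\ZZ_+$, contradicting the fact that $CS(s)$ misses $S_1$ or $S_2$. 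You would need to supply this bridge (or an equivalent one); appealing to \cite[Lemma~4.2(2c)]{lee_sakuma_9} on a single face's $+$-side does not suffice.
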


\begin{proof}
Suppose on the contrary that $s$ or $s'$ lies in $I_1(r) \cup I_2(r)$.
Without loss of generality, assume that $s \in I_1(r) \cup I_2(r)$.
By Lemma~\ref{lem:inside_orbit},
either $S_1$ or $S_2$ does not occur in $CS(s)$ as a subsequence.
But then by the feature of \cite[Figure~3(a)]{lee_sakuma_9},
$CS(\phi(\delta))=CS(s')$ contains both $S_1$ and $S_2$ as subsequences,
which implies by Lemma~\ref{lem:inside_orbit}
that $s' \notin I_1(r) \cup I_2(r)$
and therefore $s'\in (I_1(r;n)\backslash I_1(r))\cup (I_2(r;n)\backslash I_2(r))$.
By Lemmas~\ref{lem:outside_orbit2}
and \ref{lem:outside_orbit},
$(u_{s'})$ contains a subword $w$ for which $S(w)$ is a subsequence of $CS(s')$
of the form as in the proof of Lemma~\ref{lem:claim1_II}.
Thus, by the argument in the proof,
we see that
$CS(s)=CS(\phi(\alpha))$ contains $(S_1, S_2, \ell)$ as a subsequence for some $\ell \in \ZZ_+$, a contradiction.
\end{proof}

\begin{lemma}
\label{lem:terms_II}
Both $CS(s)$ and $CS(s')$ consist of $m$ and $m+1$.
\end{lemma}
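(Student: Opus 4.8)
The plan is to reduce the assertion to the situation already treated in the first paragraph of the proof of Lemma~\ref{lem:connection}.

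By Lemma~\ref{lem:claim1_II} the diagram $M$ is shaped as in \cite[Figure~3(a)]{lee_sakuma_9}, and by Lemma~\ref{lem:claim2_II} neither $s$ nor $s'$ belongs to $I_1(r)\cup I_2(r)$. Hence each of $s$ and $s'$ lies in $I_1(r;n)\setminus I_1(r)$ or in $I_2(r;n)\setminus I_2(r)$. Comparing the endpoints of the intervals $I_1(r;n),I_2(r;n),I_1(r),I_2(r)$ recorded in Section~\ref{sec:technical_lemmas}, these four possibilities are precisely the hypotheses of parts (1) and (2) of Lemmas~\ref{lem:outside_orbit2} and~\ref{lem:outside_orbit}, according as $k$ is even or odd and as $s$ lies in the $I_1$-part or the $I_2$-part. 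Applying those lemmas to $s$, and likewise to $s'$, we obtain that $CS(s)$ contains, as a subsequence, a sequence $S(w)$ of one of the four forms displayed in the proof of Lemma~\ref{lem:claim1_II}.

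Each of these four forms is built from the blocks $S_1$ and $S_2$ together with a single extra entry $m$ or $m+1$ at each end; since $S_1$ and $S_2$ consist only of $m$ and $m+1$ (they begin and end with $m+1$, resp.\ $m$, by \cite[Lemma~3.9]{lee_sakuma_9}), so does $S(w)$. Moreover, in every case $S(w)$ contains $(S_1,S_2,S_1)$ or $(S_2,S_1,S_2)$ as a subsequence; thus $CS(s)$ contains a subsequence which consists only of $m$ and $m+1$, which begins and ends with $m+1$ (resp.\ with $m$), and which contains a full copy of both $S_1$ and $S_2$. This is exactly the kind of subsequence from which, in the first paragraph of the proof of Lemma~\ref{lem:connection}, one concludes via \cite[Lemma~3.5]{lee_sakuma_9} that the whole of $CS(s)$ consists of $m$ and $m+1$; the same deduction applies here, and likewise to $CS(s')$. (If one wishes to argue directly, the same induction on $k$ as in Lemma~\ref{lem:connection} also works: the reductions $\tilde r,\tilde s$ of \cite[Lemma~3.8]{lee_sakuma_9} and the block substitutions of \cite[Lemma~3.12]{lee_sakuma_9} pass the containment of $(S_1,S_2,S_1)$ or $(S_2,S_1,S_2)$ in $CS(s)$ down to the analogous containment for $\tilde s$ with $\tilde r$ of smaller continued-fraction length, and the base cases $k=2$ and $k=3$ with $m_2=1$ are handled from the explicit forms of $S_1,S_2$ in \cite[Lemma~3.12(1) and~(3)]{lee_sakuma_9}.)

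The step I expect to be the main obstacle is the last one, namely passing from the local fact that $CS(s)$ contains a subword whose entries are all $m$ or $m+1$ to the global statement about every block of $CS(s)$: one has to exclude, using the membership $s\in I_1(r;n)\cup I_2(r;n)$, the possibility that $CS(s)$ carries a component exceeding $m+1$ somewhere away from the exhibited subword. This is precisely the type of bookkeeping carried out in Case~2 of the proof of Lemma~\ref{lem:connection}, where the explicit description of $I_1(r;n)\cup I_2(r;n)$ was used to reach a contradiction; transplanting that analysis is the one genuinely nontrivial ingredient.
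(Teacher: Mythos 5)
Your argument is correct and follows essentially the same route as the paper's own proof: Lemma~\ref{lem:claim2_II} places $s$ and $s'$ in $(I_1(r;n)\setminus I_1(r))\cup(I_2(r;n)\setminus I_2(r))$, Lemmas~\ref{lem:outside_orbit2} and \ref{lem:outside_orbit} then show that $CS(s)$ and $CS(s')$ contain both $S_1$ and $S_2$, and \cite[Lemmas~3.5 and 3.9]{lee_sakuma_9} give the conclusion. The ``main obstacle'' you flag at the end is not actually an issue: \cite[Lemma~3.5]{lee_sakuma_9} is already a global statement that the components of $CS(s)$ take at most two consecutive values $l_1$ and $l_1+1$, so once both $m$ and $m+1$ are exhibited as components the whole cyclic sequence must consist of $m$ and $m+1$, with no further bookkeeping of the kind in Case~2 of Lemma~\ref{lem:connection} required.
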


\begin{proof}
By Lemma~\ref{lem:claim2_II} together with Lemmas~\ref{lem:outside_orbit2}
and \ref{lem:outside_orbit},
$CS(s)$ and $CS(s')$ contain both $S_1$ and $S_2$.
Hence by \cite[Lemmas~3.5 and 3.9]{lee_sakuma_9},
both $CS(s)$ and $CS(s')$ consist of $m$ and $m+1$.
\end{proof}

At this point, we introduce the concept for a vertex of $M$ to be
converging, diverging or mixing (cf. \cite[Section~7]{lee_sakuma_4}).
To this end, we subdivide the edges of $M$ so that the label of
any oriented edge in the subdivision has length $1$.
We call each of the edges in the subdivision a {\it unit segment}
in order to distinguish them from the edges in the original $M$.

\begin{definition}
\label{def:vertex_type}
{\rm
(1) A vertex $x$ in $M$ is said to be {\it converging} (resp., {\it diverging})
if the set of labels of incoming
unit segments of $x$ is $\{a, b\}$ (resp., $\{a^{-1}, b^{-1}\}$).
See Figure~\ref{fig.converging} and its caption for description.

(2) A vertex $x$ in $M$ is said to be {\it mixing}
if the set of labels of incoming
unit segments of $x$ is $\{a, a^{-1},b, b^{-1}\}$.
See Figure~\ref{fig.impossible_tsequence} and its caption for description.
}
\end{definition}

\begin{figure}[h]
\includegraphics{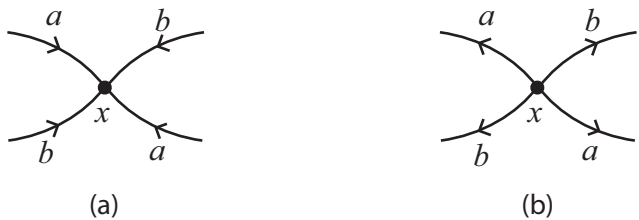}
\caption{
Orient each of the unit segment so that
the associated label is equal to $a$ or $b$.
Then a vertex $x$ is (a) converging (resp., (b) diverging)
if all unit segments incident on $x$
are oriented so that they are converging into $x$
(resp., diverging from $x$).}
\label{fig.converging}
\end{figure}

\begin{figure}[h]
\includegraphics{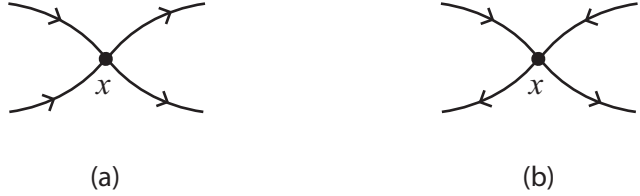}
\caption{
A vertex $x$ is mixing if it looks like as in the above when
we orient the segments as in Convention~\ref{con:figure} below.}
\label{fig.impossible_tsequence}
\end{figure}

The proof of Lemma~\ref{lem:converging_diverging} below
is a slight modification of
that of \cite[Proposition~7.5(2)]{lee_sakuma_4},
where we employ the following convention.

\begin{convention}
\label{con:figure}
{\rm
In Figures~\ref{fig.vertex_type}--\ref{fig.transformation_1},
the change of directions of consecutive arrowheads
represents the change from positive (negative, resp.) words
to negative (positive, resp.) words, and
a dot represents a vertex whose position is clearly identified.
Also small letters $c_i$ and $d_i$ ($i=1,2$) represent the lengths of the corresponding positive
(or negative) words.
The upper complementary region is regarded as the unbounded region
of $\RR^2-M$.
Thus the outer boundary cycles runs
the upper boundary from left to right.
}
\end{convention}

\begin{lemma}
\label{lem:converging_diverging}
We may assume that every vertex $x$ of $M$ with degree $4$
is either converging or diverging.
To be precise, we can modify the reduced nontrivial annular diagram $M$
into a reduced nontrivial annular diagram $M'$
keeping the outer and inner boundary labels unchanged
so that every vertex of $M'$ with degree $4$
is either converging or diverging.
In particular,
under \cite[{\it Notation}~4.18]{lee_sakuma_9},
every $S(\phi(\partial D_i^+))$ is a subsequence of both
$CS(\phi(\alpha))$ and $CS(\phi(\partial D_i))$.
Similarly, every $S(\phi(\partial D_i^-))$ is a subsequence of both
$CS(\phi(\delta^{-1}))$ and $CS(\phi(\partial D_i)^{-1})$.
\end{lemma}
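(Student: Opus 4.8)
The plan is to adapt the argument of \cite[Proposition~7.5(2)]{lee_sakuma_4} to the present setting, with the small-cancellation hypotheses on $M$ supplied by Corollary~\ref{cor:consecutive_vertices} and Lemma~\ref{lem:claim1_II}. The first step is to understand what a degree-$4$ vertex $x$ of $M$ that is neither converging nor diverging must look like. By definition such an $x$ has incoming unit segments carrying both a letter and its inverse; after orienting the unit segments so that every label is $a$ or $b$ (Convention~\ref{con:figure}), the link of $x$ decomposes the four incident edges into maximal runs of coherently-oriented segments. If $x$ is not mixing, there are exactly two such runs, i.e.\ $x$ looks like the concatenation point of a positive word of length $c_1$ followed by a negative word of length $c_2$ on one side and similarly $d_1,d_2$ on the other; if $x$ is mixing it looks as in Figure~\ref{fig.impossible_tsequence}. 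The first thing I would do is rule out mixing vertices outright: a mixing vertex forces a short relator subword (a product of few pieces) straddling $x$, which by \cite[Lemma~4.2]{lee_sakuma_9} and Corollary~\ref{cor:consecutive_vertices} is incompatible with the small-cancellation bound $4n-1$ enjoyed by $(u_s)$ and $(u_{s'})$, exactly as in \cite[Section~7]{lee_sakuma_4}.

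The second, and main, step is the local surgery that removes a non-converging, non-diverging degree-$4$ vertex $x$. Around such an $x$ the two incident faces in (say) the outer boundary layer meet along an edge whose label is, without loss of generality, a positive word; the configuration of the $c_i$'s and $d_i$'s (Convention~\ref{con:figure}) tells us that one can slide the vertex along an edge — equivalently, push a subword of one relator boundary past $x$ onto the neighbouring face — producing a new diagram $M'$ with the same outer and inner boundary labels but with one fewer bad vertex. This is the content of the transformation pictured in Figure~\ref{fig.transformation_1}; one must check that $M'$ is still reduced (no new cancelling pair of faces is created, which follows because the pushed subword is a piece and the relator $u_r^n$ is cyclically reduced and not a proper power in the relevant sense) and still nontrivial and annular (the surgery is supported in a disk, so the annular topology and the non-triviality of $\delta$ are untouched). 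Iterating, and using that each surgery strictly decreases a suitable complexity (e.g.\ the number of degree-$4$ vertices that are neither converging nor diverging, or the total number of maximal positive/negative runs along interior edges), the process terminates in an $M'$ all of whose degree-$4$ vertices are converging or diverging.

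The third step is to extract the stated consequence about $T$-sequences. Once every degree-$4$ vertex of $M'$ is converging or diverging, along the outer boundary cycle $\alpha$ the label $\phi(\partial D_i^+)$ of the boundary arc of each outer-layer face $D_i$ (\cite[Notation~4.18]{lee_sakuma_9}) neither begins nor ends in the ``middle'' of a maximal positive or negative block of $\phi(\alpha)$: a converging/diverging vertex is precisely a place where the sign of the word changes, so the block structure of $\phi(\partial D_i^+)$ embeds into that of $\phi(\alpha)$ and, since $\phi(\partial D_i^+)$ is literally a subword of the cyclic relator word, into that of $\phi(\partial D_i)$ as well. Translating ``block structure'' into the $CS(\cdot)$/$S(\cdot)$ language (the $S$-sequence records exactly the lengths of these alternating blocks, cf.\ \cite[Lemma~4.2]{lee_sakuma_9}), this says $S(\phi(\partial D_i^+))$ is a subsequence of both $CS(\phi(\alpha))$ and $CS(\phi(\partial D_i))$, and the mirror statement for the inner layer and $\delta^{-1}$ follows by the same reasoning applied to the inner boundary cycle.

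I expect the main obstacle to be the bookkeeping in the second step: verifying that the local move genuinely applies in every combinatorial type of bad degree-$4$ vertex, that it preserves reducedness of the diagram, and that some complexity strictly drops so the iteration halts. This is where one must be careful with the sign patterns $(c_1,c_2)$ versus $(d_1,d_2)$ and with the possibility that the vertex $x$ lies on $\alpha$ or on an interior edge between the two boundary layers; the cases where $x$ is incident to the boundary need the hypothesis that $(u_s)$ and $(u_{s'})$ are cyclically reduced, and the interior cases need Lemma~\ref{lem:claim1_II} (so that $M$ is the two-layer diagram of \cite[Figure~3(a)]{lee_sakuma_9}) to limit the configurations that can arise. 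All of this is parallel to \cite[Proposition~7.5(2)]{lee_sakuma_4}, so I would present it as a ``slight modification'' of that argument, indicating only the points where the Heckoid relator $u_r^n$ (rather than the knot-group relator $u_r$) changes the piece-length bookkeeping.
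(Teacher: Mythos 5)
There is a genuine gap at the heart of your second step. You assert that at a non-converging, non-diverging degree-$4$ vertex ``one can slide the vertex along an edge \dots producing a new diagram $M'$ with the same outer and inner boundary labels,'' but you never establish the condition that makes this surgery possible. The cut-and-reidentify move of Figures~\ref{fig.transformation_1} and \ref{fig.transformation_2} requires an exact equality of syllable lengths across the vertex, namely $c_2=d_1$ or $c_1=d_2$ in the notation of Convention~\ref{con:figure}; without such an equality the two white vertices you want to identify sit at different positions along the edge and the move is simply not defined. The paper obtains these equalities from Lemma~\ref{lem:terms_II}: since $CS(s)$, $CS(s')$ and $CS(u_r^n)$ all have every term equal to $m$ or $m+1$, the linear constraints on $c_1,c_2,d_1,d_2$ at each of the five vertex types of Figure~\ref{fig.vertex_type} force either an outright contradiction (types (a) and (b), which are exactly your ``mixing'' vertices) or the equality $c_2=d_1$ resp.\ $c_1=d_2$ (types (c), (d), (e)). This case analysis is the substance of the proof, and your proposal omits it entirely; Lemma~\ref{lem:terms_II} is never invoked.

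Relatedly, your plan to ``rule out mixing vertices outright'' by a piece-count via Corollary~\ref{cor:consecutive_vertices} does not match how these vertices actually die: type (a) is excluded because it would create a term $c_1+c_2\ge 2m>m+1$ in $CS(s)$, and type (b) because it would force a term $1$ in $CS(s')$ --- both contradicting Lemma~\ref{lem:terms_II}, not any small-cancellation bound. Your worry about termination of the iteration is also moot once the correct move is in place, since each transformation converts the bad vertex into a converging or diverging one without creating new bad vertices. Your third step (reading off that $S(\phi(\partial D_i^{\pm}))$ is a subsequence of the relevant cyclic $S$-sequences once all degree-$4$ vertices are converging or diverging) is correct as stated.
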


\begin{proof}
Suppose on the contrary that there is a vertex $x \in M$ with degree $4$ such that
$x$ is neither converging nor diverging.
We may assume $x$ is the vertex between $D_1$ and $D_2$.
Then $x$ has one of the five types as depicted in Figure~\ref{fig.vertex_type},
where $c_i$ and $d_i$ ($i=1,2$) are positive integers,
up to simultaneous reversal of the edge orientations
and up to the reflection in the vertical edge passing through the vertex $x$.
To see this, let $L$ be the set of labels of incoming
unit segments of $x$, and
orient each of the unit segment so that
the associated label is equal to $a$ or $b$
as in Figure~\ref{fig.converging}.
If $L=\{a^{\pm 1}, b^{\pm 1}\}$, then
we obtain the situation (a) or (b) in Figure~\ref{fig.vertex_type}.
If $L$ consists of three elements,
then we may assume that $a$ and $a^{-1}$, respectively,
appear as the label of the upper left and lower right
incoming unit segments
and that $b$ or $b^{-1}$ does not belong to $L$.
Then we obtain the situation (c) or (d) in Figure~\ref{fig.vertex_type}.
If $L$ consists of two elements,
then we may assume both the upper left and lower right
incoming unit segments have label $a$,
and both the upper left and lower right
incoming unit segments have label $b^{-1}$,
because $x$ is not converging nor diverging.
In this case, we have the situation (e) in Figure~\ref{fig.vertex_type}.

\begin{figure}[h]
\includegraphics{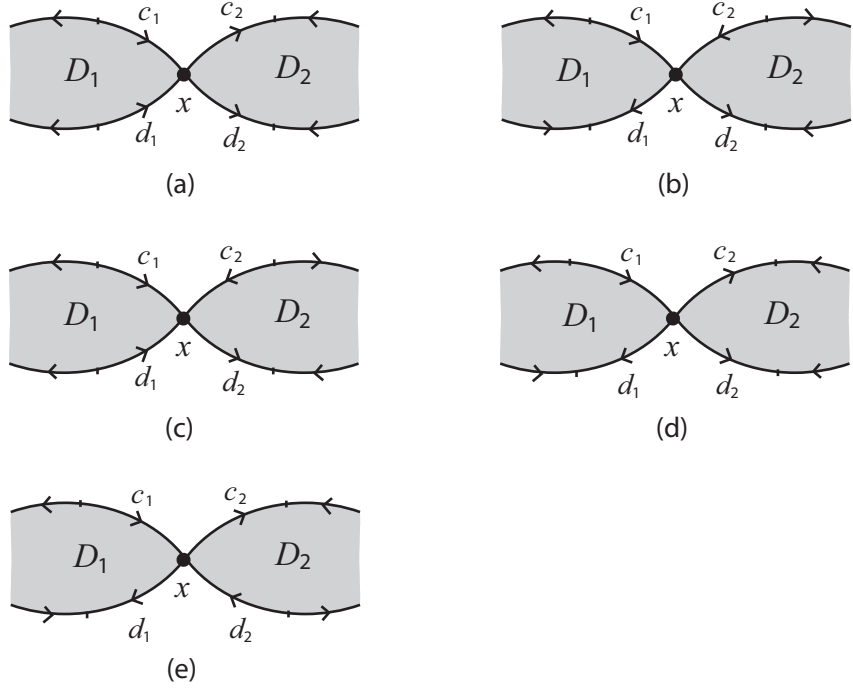}
\caption{
The five possible types of a vertex $x \in M$ with degree $4$
such that $x$ is neither converging nor diverging}
\label{fig.vertex_type}
\end{figure}

Assume that $x$ is depicted as in Figure~\ref{fig.vertex_type}(a).
Then, for each $i=1,2$, $c_i$ is a term of $CS(\phi(\partial D_i))=CS(u_r^n)$
and so is equal to $m$ or $m+1$.
Hence the term, $c_1+c_2$, of $CS(\phi(\alpha))=CS(s)$ is at least $2m$.
By Lemma~\ref{lem:terms_II}, $CS(s)$ consists of $m$ and $m+1$.
But since $2m > m+1$, we obtain a contradiction.

Assume that $x$ is depicted as in Figure~\ref{fig.vertex_type}(b).
Then $(c_1,c_2)$ is a subsequence of $CS(\phi(\alpha))=CS(s)$.
Since $CS(\phi(\alpha))=CS(s)$ consists of $m$ and $m+1$ by Lemma~\ref{lem:terms_II},
the only possibility is that
$c_1=c_2=m$ and $d_1=d_2=1$.
But then there is a term $1$ in $CS(s')$,
which is a contradiction, because $CS(s')$
also consists of $m$ and $m+1$ again by Lemma~\ref{lem:terms_II}.

Assume that $x$ is depicted as in Figure~\ref{fig.vertex_type}(c).
Then $(c_1,c_2)$ is a subsequence of $CS(\phi(\alpha))=CS(s)$
and $d_1+d_2$ is a term of $CS(\phi(\delta))=CS(s')$.
Thus each of $c_1$, $c_2$ and $d_1+d_2$ is either $m$ or $m+1$ by Lemma~\ref{lem:terms_II}.
Moreover, $c_2+d_2$ is a term of $CS(u_r^n)$
and hence it is either $m$ or $m+1$.
So, we have the following two possibilities:
\begin{enumerate}[\indent \rm (i)]
\item $c_1=m$, $c_2=m$, $d_1=m$, $d_2=1$;

\item $c_1=m+1$, $c_2=m$, $d_1=m$, $d_2=1$.
\end{enumerate}
In either case, since $c_2=d_1$,
we can transform $M$ so that $x$ is diverging
as in Figure~\ref{fig.transformation_2}.
To be precise, we cut $M$ at the black vertex in the left figure in
Figure~\ref{fig.transformation_2} and then
identify the two white vertices.
The resulting diagram is illustrated
in the right figure in Figure~\ref{fig.transformation_2},
where the black vertex is the image of the white vertices.
It should be noted that
this modification does not change the boundary labels of $M$
and the new vertex of $M$ is converging or diverging.

\begin{figure}[h]
\includegraphics{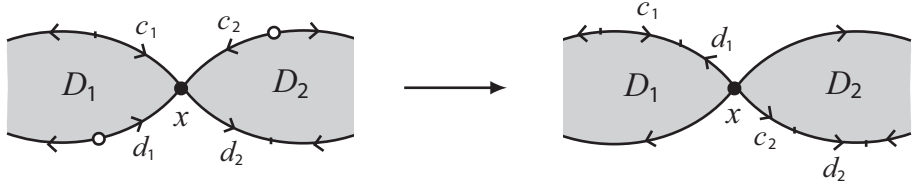}
\caption{
The transformation of Figure~\ref{fig.vertex_type}(c) when
$c_2=d_1$ so that $x$ is diverging}
\label{fig.transformation_2}
\end{figure}

Assume that $x$ is depicted as in Figure~\ref{fig.vertex_type}(d).
Then $c_1+c_2$ is a term of $CS(\phi(\alpha))=CS(s)$
and $(d_1, d_2)$ is a subsequence of $CS(\phi(\delta))=CS(s')$.
Thus each of $c_1+c_2$, $d_1$ and $d_2$ is either $m$ or $m+1$ by Lemma~\ref{lem:terms_II}.
Moreover, $c_1+d_1$ is a term of $CS(u_r^n)$
and hence it is either $m$ or $m+1$.
So, we have the following two possibilities:
\begin{enumerate}[\indent \rm (i)]
\item $c_1=1$, $c_2=m$, $d_1=m$, $d_2=m$;

\item $c_1=1$, $c_2=m$, $d_1=m$, $d_2=m+1$.
\end{enumerate}
In either case, since $c_2=d_1$,
we can transform $M$ so that $x$ is converging
as in Figure~\ref{fig.transformation_1}.

\begin{figure}[h]
\includegraphics{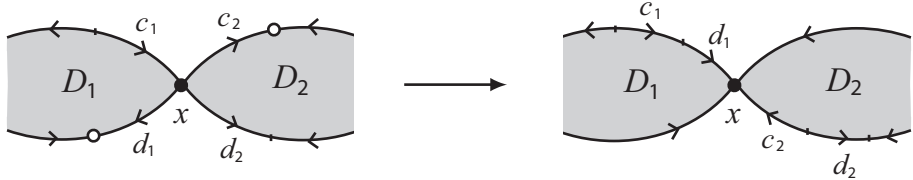}
\caption{
The transformation of Figure~\ref{fig.vertex_type}(d) when $c_2=d_1$
so that $x$ is converging}
\label{fig.transformation_1}
\end{figure}

Assume that $x$ is depicted as in Figure~\ref{fig.vertex_type}(e).
Then $c_1+c_2$ is a term of $CS(\phi(\alpha))=CS(s)$
and $d_1+d_2$ is a term of $CS(\phi(\delta))=CS(s')$.
Thus each of $c_1+c_2$ and $d_1+d_2$ is either $m$ or $m+1$ by Lemma~\ref{lem:terms_II}.
Moreover, for each $i=1,2$, $c_i+d_i$ is a term of $CS(u_r^n)$ and hence
it is either $m$ or $m+1$.
So, we have the following six possibilities:
\begin{enumerate}[\indent \rm (i)]
\item $c_1+c_2=m$, $d_1+d_2=m$, $c_1+d_1=m$, $c_2+d_2=m$;

\item $c_1+c_2=m$, $d_1+d_2=m+1$, $c_1+d_1=m$, $c_2+d_2=m+1$;

\item $c_1+c_2=m$, $d_1+d_2=m+1$, $c_1+d_1=m+1$, $c_2+d_2=m$;

\item $c_1+c_2=m+1$, $d_1+d_2=m$, $c_1+d_1=m$, $c_2+d_2=m+1$;

\item $c_1+c_2=m+1$, $d_1+d_2=m$, $c_1+d_1=m+1$, $c_2+d_2=m$;

\item $c_1+c_2=m+1$, $d_1+d_2=m+1$, $c_1+d_1=m+1$, $c_2+d_2=m+1$.
\end{enumerate}
If (i), (ii), (v) or (vi) happens, then
$c_1+c_2=c_1+d_1$ and so $c_2=d_1$.
Thus, as illustrated in Figure~\ref{fig.transformation_1},
we may transform $M$ so that $x$ is converging.
If (iii) or (iv) happens, then
$c_1+c_2=c_2+d_2$ and so $c_1=d_2$.
So we can transform $M$ so that $x$ is diverging
as in Figure~\ref{fig.transformation_2}.
\end{proof}

\begin{lemma}
\label{lem:twist_link}
$r \neq [m,2]$.
\end{lemma}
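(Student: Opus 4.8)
The plan is to argue by contradiction: assume $r=[m,2]$ (so $k=2$, $m_2=2$, $m=m_1\ge 2$) and use the special form of $S_1=(m+1)$, $S_2=(m)$ given by \cite[Lemma~3.12(3)]{lee_sakuma_9} to push the combinatorial analysis of the diagram $M$ far enough to produce a contradiction. By Lemmas~\ref{lem:claim1_II} and \ref{lem:converging_diverging} we already know $M$ is shaped as in \cite[Figure~3(a)]{lee_sakuma_9} and that every degree-$4$ vertex of $M$ is converging or diverging; by Lemma~\ref{lem:terms_II} both $CS(s)$ and $CS(s')$ consist only of $m$ and $m+1$. In this case $CS(u_r^n)=\lp 2n\langle m+1,m\rangle\rp$, so $CS(u_r^n)$ itself consists of $m$ and $m+1$, which makes the ``piece'' structure of $(u_r^{\pm n})$ very rigid.

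First I would record, using \cite[Lemma~4.2]{lee_sakuma_9} (applied with $r=[m,2]$), exactly which subwords of $(u_r^{\pm n})$ are pieces and how long a product of $j$ pieces can be; the point is that since $S_1$ and $S_2$ are singletons, a piece is short, and a face $D_i$ in the outer boundary layer has $\phi(\partial D_i^+)$ and $\phi(\partial D_i^-)$ each a product of at least $2$ pieces (Figure~3(a)), which forces $S(\phi(\partial D_i^-))$ to contain a controlled alternating block of $m$'s and $m+1$'s. Next, using Lemma~\ref{lem:converging_diverging}, each $S(\phi(\partial D_i^+))$ is a genuine subsequence of $CS(\phi(\alpha))=CS(s)$ and each $S(\phi(\partial D_i^-))$ of $CS(\phi(\delta^{-1}))=CS(s')$. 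Combining these with the known structure $CS(r)=\lp S_1,S_2,S_1,S_2\rp=\lp m+1,m,m+1,m\rp$ and the fact that in Figure~3(a) the faces tile an annulus whose two boundary reads are $(u_s)$ and $(u_{s'}^{\pm1})$, I would show that $CS(s)$ and $CS(s')$ must then each be forced to be a power of the single block $(m+1,m)$ — equivalently each of $s,s'$ is forced into $\RGPP{r}$-orbit or into the narrow band already excluded — and then invoke Lemmas~\ref{lem:claim2_II}, \ref{lem:outside_orbit2}, \ref{lem:outside_orbit} together with the explicit endpoints $r_1=[m,1,2]$, $r_2=[m,2,2n-2]$ of $I_1(r;n)\cup I_2(r;n)$ to see that $s$ and $s'$ cannot both lie in $I_1(r;n)\cup I_2(r;n)$ and be distinct, the contradiction.

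More concretely, I expect the cleanest route is: by Lemma~\ref{lem:claim2_II} neither $s$ nor $s'$ is in $I_1(r)\cup I_2(r)$, so by Lemmas~\ref{lem:outside_orbit2} and \ref{lem:outside_orbit} each of $(u_s)$, $(u_{s'})$ contains one of the four subwords $w$ listed in the proof of Lemma~\ref{lem:claim1_II}; in the case $r=[m,2]$ these degenerate (since $S_{1e}=S_{1b}=S_{2e}=S_{2b}=\emptyset$) to $(m+1,m+1,m,m+1,m+1)$, $(m,\,d\langle m,m+1\rangle,m,m)$, $(m,m,m+1,m,m)$, or $(m+1,\,d\langle m+1,m\rangle,m+1,m+1)$. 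I would then run the ``there is a face $D$ in the outer boundary layer with $\phi(\partial D^+)$ a subword of $w$'' argument of Lemma~\ref{lem:claim1_II} once more, but now against Figure~3(a) rather than Figure~3(b): since $CS(\phi(\partial D))=\lp 2n\langle m+1,m\rangle\rp$ and $w$ contains a repeated $m$ or a repeated $m+1$ (a block $(m+1,m+1)$ or $(m,m)$) which never occurs in $\lp 2n\langle m+1,m\rangle\rp$, no such face can exist, and the two-faces alternative also fails by the uniqueness of $S_1$ in $CS(r)$; tracing this through forces $S(\phi(\partial D^-))$ for some outer-layer face to contain $(m+1,m,m+1)$ or $(m,m+1,m)$ sandwiched between extra letters, i.e.\ a product of more than $2$ pieces, contradicting Figure~3(a) via \cite[Corollary~4.17(1)]{lee_sakuma_9} and \cite[Lemma~4.2]{lee_sakuma_9}.

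The main obstacle I anticipate is bookkeeping the degenerate piece structure when $S_1,S_2$ are singletons: the estimates in \cite[Lemma~4.2]{lee_sakuma_9} that normally give a comfortable margin ($4n-2\ge 6$ pieces, etc.) become tight, so I must check carefully that ``product of $2$ pieces'' really does bound the length of $\phi(\partial D_i^\pm)$ short enough to exclude the alternating triples $(m+1,m,m+1)$ and $(m,m+1,m)$ with extra flanking letters. I would isolate this as a short sublemma about $(u_{[m,2]}^{\pm n})$ — listing its pieces and the maximal $S$-sequence realized by a product of two pieces — and then the contradiction with the subwords $w$ above is immediate. If the margin is genuinely too tight for small $m$ or small $n$, the fallback is to separate out the finitely many exceptional $(m,n)$ and dispatch them by the explicit description of the fundamental domain and the intervals $I_1(r;n)$, $I_2(r;n)$.
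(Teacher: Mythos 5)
There is a genuine gap at the point where you derive your final contradiction. Your argument ends by forcing some face $D$ of $M$ to have $S(\phi(\partial D^-))$ containing $(m+1,m,m+1)$ or $(m,m+1,m)$ with extra flanking letters, and you then claim this is a product of more than $2$ pieces, ``contradicting Figure~3(a) via Corollary~4.17(1).'' But the constraint that $\phi(\partial D^-)$ be a product of two pieces belongs to the two-layer configuration of \cite[Figure~3(b)]{lee_sakuma_9}, i.e.\ to \cite[Corollary~4.17(2)]{lee_sakuma_9}. By Lemma~\ref{lem:claim1_II} the diagram $M$ in play here is of the single-layer type of \cite[Figure~3(a)]{lee_sakuma_9}, in which $\phi(\partial D_i^-)$ lies along the \emph{inner boundary} of the annulus and is long --- indeed, in the correct argument one actually encounters $S(\phi(\partial D_2^-))=(m,(2n-1)\langle m+1,m\rangle)$, essentially a whole relator minus two letters --- so no two-piece bound is available and the contradiction you aim at does not exist. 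A secondary issue: for $r=[m,2]$ we have $k=2$ even, so only the first two of your four forms of $S(w)$ can occur; and ``uniqueness of $S_1$ in $CS(r)$'' gives essentially nothing when $S_1=(m+1)$ is a single letter occurring $2n$ times in $CS(u_r^n)$ --- which is precisely why this case has to be isolated in its own lemma rather than absorbed into the argument of Lemma~\ref{lem:claim1_II}.

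Your raw materials are the right ones --- the degenerate blocks $(m+1,m+1)$ resp.\ $(m,m)$ supplied by Lemmas~\ref{lem:outside_orbit2} and \ref{lem:outside_orbit}, and the observation that such a block cannot sit inside a single $S(\phi(\partial D_i^+))$ (each being a subsequence of $\lp 2n\langle m+1,m\rangle\rp$ by Lemma~\ref{lem:converging_diverging}) and so must straddle two consecutive faces --- but the mechanism that closes the argument is different. One must read off what the straddling forces on the \emph{inner} sides $S(\phi(\partial D_i^-))$ and contradict either Lemma~\ref{lem:connection}(1) (a block $((2n-2)\langle m+1,m\rangle,m+1)$ would appear in $CS(s')$), or \cite[Lemma~3.5]{lee_sakuma_9} ($CS(s')$ would have to contain both $(m,m)$ and $(m+1,m+1)$); the endpoint $s=[m,1,2]$ needs a separate explicit computation showing $M$ would consist of exactly two faces and $s'$ would have to equal $[m,2,2n-2]\notin I(r;n)$. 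Also, your fallback of ``finitely many exceptional $(m,n)$'' is not available, since $m$ ranges over all integers $\ge 2$.
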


\begin{proof}
Suppose on the contrary that $r=[m,2]$.
By \cite[Lemma~3.12(3)]{lee_sakuma_9}, $S_1=(m+1)$ and $S_2=(m)$.

\medskip
\noindent {\bf Claim.} $s, s' \notin I_1(r;n) \setminus I_1(r)$.

\begin{proof}[Proof of Claim]
Suppose on the contrary that $s$ or $s'$ is contained in $I_1(r;n) \setminus I_1(r)$.
Without loss of generality, assume that $s \in I_1(r;n) \setminus I_1(r)$,
i.e., $[m+1] < s \le [m,1,2]$.
Suppose that $s=[m,1,2]$.
Then
$CS(s)=\lp 2\langle m+1\rangle , m, 2\langle m+1\rangle, m \rp$
(see \cite[Lemma 3.12(3)]{lee_sakuma_9}).
Since $CS(\phi(\partial D))=\lp 2n \langle m+1, m \rangle \rp$ for every face $D$ in $M$,
we see by Lemma~\ref{lem:converging_diverging} that
there are two successive $2$-cells, say $D_1$ and $D_2$, in $M$
such that $S(\phi(\partial D_1^+))=(\dots, m+1)$,
$S(\phi(\partial D_2^+))=(m+1, \dots)$ and such that $S(\phi(\partial D_1^+\partial D_2^+))=(\dots, m+1, m+1, \dots)$.
Since neither $S(\phi(\partial D_1^-))$ nor $S(\phi(\partial D_2^-))$ can contain
$((2n-2) \langle m+1, m \rangle, m+1)$ as a subsequence
by Lemma~\ref{lem:connection}(1),
this together with Lemma~\ref{lem:converging_diverging}
implies that
$S(\phi(\partial D_1^+))$ contains $(m+1,m,m+1)$ as a tail
and $S(\phi(\partial D_2^+))$ contains $(m+1,m,m+1)$ as a head.
Since $CS(s)=\lp 2\langle m+1\rangle , m, 2\langle m+1\rangle, m \rp$,
this implies that $M$ consists of only two $2$-cells $D_1$ and $D_2$
with $S(\phi(\partial D_1^+))=(m+1,m,m+1)$ and
$S(\phi(\partial D_2^+))=(m+1,m,m+1)$.
Hence, $CS(\phi(\delta^{-1}))=CS(\phi(\partial D_1^-\partial D_2^-))=
\lp(2n-2) \langle m, m+1 \rangle, m, (2n-2) \langle m, m+1 \rangle, m \rp$.
Since $CS(s')=CS(\phi(\delta^{-1}))$,
this implies that $s'=[m,2,2n-2]$.
But then $s'$ is not contained in $I_1(r;n) \cup I_2(r;n)$,
a contradiction.

So assume that $[m+1] < s < [m,1,2]$.
Write $s$ as a continued fraction expansion
$s=[l_1, l_2, \dots, l_h]$,
where $h \ge 1$, $(l_1, \dots, l_h) \in (\ZZ_+)^h$ and $l_h \ge 2$.
Then $l_1=m$, $l_2=1$ and either $l_3 \ge 3$ or both $l_3 \ge 2$ and $h \ge 4$.
In either case,
we can see by using \cite[Lemma~3.12]{lee_sakuma_9} that
$CS(\phi(\alpha))=CS(s)$ contains $(m+1,m+1,m+1)$ as a subsequence.
Then, since $m+1$'s are isolated in
$CS(\phi(\partial D))=\lp 2n \langle m+1, m \rangle \rp$ for every face $D$ in $M$,
we see by Lemma~\ref{lem:converging_diverging} that
there are three successive faces $D_1, D_2, D_3$ in $M$
such that $S(\phi(\partial D_1^+))=(\dots, m+1)$, $S(\phi(\partial D_2^+))=(m+1)$,
$S(\phi(\partial D_3^+))=(m+1, \dots)$ and such that
$S(\phi(\partial D_1^+\partial D_2^+\partial D_3^+))=(\dots, m+1, m+1, m+1, \dots)$.
But then, since $CS(\phi(\partial D_2))=\lp 2n \langle m+1, m \rangle \rp$,
we have
$S(\phi(\partial D_2^-))=(m, (2n-1) \langle m+1, m \rangle)$.
Hence $CS(\phi(\delta))=CS(s')$ contains $((2n-2) \langle m+1, m \rangle, m+1)$ as a subsequence,
which is a contradiction to Lemma~\ref{lem:connection}(1).
\end{proof}

By the above claim, we have
both $s$ and $s'$ belong to $I_2(r;n) \setminus I_2(r)$.
Then by Lemma~\ref{lem:outside_orbit}(1),
both $CS(s)$ and $CS(s')$ contain
$(m, d \langle m, m+1 \rangle, m, m)$
where $1 \le d \le 2n-3$, because $S_{1e}=S_{1b}=\emptyset$.
Again by Lemma~\ref{lem:converging_diverging}, we see that
there are two successive $2$-cells, say $D_1$ and $D_2$, in $M$
such that $S(\phi(\partial D_1^+))=(\dots, m)$,
$S(\phi(\partial D_2^+))=(m, \dots)$
and such that $S(\phi(\partial D_1^+\partial D_2^+))=(\dots, m, m, \dots)$.
Then since $CS(\phi(\partial D_1))=CS(\phi(\partial D_2))=\lp 2n \langle m+1, m \rangle \rp$,
we have
$S(\phi(\partial D_1^-))=(\dots, m+1)$, $S(\phi(\partial D_2^-))=(m+1, \dots)$
and $S(\phi(\partial D_1^-\partial D_2^-))=(\dots, m+1, m+1, \dots)$.
This implies that $CS(\phi(\delta))=CS(s')$ contains $(m+1, m+1)$.
But, $CS(s')$ also contains $(m,m)$ as observed at the beginning of this paragraph,
a contradiction to \cite[Lemma~3.5]{lee_sakuma_9}.
\end{proof}

By $\tilde{r}$, $\tilde{s}$ and $\tilde{s}'$,
we denote the rational numbers defined as in \cite[Lemma~3.8]{lee_sakuma_9}
for the rational numbers $r, s$ and $s'$
so that $CS(\tilde{r})=CT(r)$, $CS(\tilde{s})=CT(s)$ and $CS(\tilde{s}')=CT(s')$.

\begin{lemma}
\label{lem:claim3_II}
$\tilde{s}, \tilde{s}' \in I_1(\tilde{r};n)\cup I_2(\tilde{r};n)$.
\end{lemma}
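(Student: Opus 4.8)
The point of this lemma is to enable a descent on the continued-fraction length of $r$, and its proof simply re-runs, now for both $s$ and $s'$, the reduction argument already present in the proof of Lemma~\ref{lem:connection}; the interval bookkeeping needed there was asserted without comment and will be the only thing requiring care here.

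First I would record what is already known. By Lemma~\ref{lem:terms_II} both $CS(s)$ and $CS(s')$ consist of $m$ and $m+1$, and, as seen in the proof of that lemma (via Lemma~\ref{lem:claim2_II} together with Lemmas~\ref{lem:outside_orbit2} and~\ref{lem:outside_orbit}), each of $CS(s)$ and $CS(s')$ contains both $S_1$ and $S_2$ as a subsequence. In particular $s,s'\ne 0$, since $CS(0)=\lp 2 \rp$ contains no term equal to $m+1$; hence we may write $s=[l_1,\dots,l_t]$ and $s'=[l'_1,\dots,l'_{t'}]$ with $l_1=l'_1=m\ge 2$ and $l_t,l'_{t'}\ge 2$.

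Next I would split according to $m_2$, exactly as in the proof of Lemma~\ref{lem:connection}. If $m_2=1$ (which forces $k\ge 3$), then \cite[Corollary~3.14(1)]{lee_sakuma_9} shows that $(m+1,m+1)$ is a subsequence of $S_1$, hence of $CS(s)$ and of $CS(s')$, so \cite[Lemma~3.5]{lee_sakuma_9} forces $l_2=l'_2=1$; consequently $\tilde r=[m_3,\dots,m_k]$, $\tilde s=[l_3,\dots,l_t]$ and $\tilde s'=[l'_3,\dots,l'_{t'}]$. If instead $m_2\ge 2$, then since $r=[m,2]$ is excluded by Lemma~\ref{lem:twist_link} we are in the case $k=2$ with $m_2\ge 3$, or $k\ge 3$ with $m_2\ge 2$, and \cite[Corollary~3.14(2)]{lee_sakuma_9} shows that $(m,m)$ is a subsequence of $S_2$, hence of $CS(s)$ and of $CS(s')$, so \cite[Lemma~3.5]{lee_sakuma_9} forces $l_2,l'_2\ge 2$; consequently $\tilde r=[m_2-1,m_3,\dots,m_k]$, $\tilde s=[l_2-1,l_3,\dots,l_t]$ and $\tilde s'=[l'_2-1,l'_3,\dots,l'_{t'}]$.

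Finally, in each of these two cases the desired conclusion $\tilde s,\tilde s'\in I_1(\tilde r;n)\cup I_2(\tilde r;n)$ is to be deduced from the hypothesis $s,s'\in I_1(r;n)\cup I_2(r;n)$ by the same elementary comparison of endpoints used in the corresponding case of the proof of Lemma~\ref{lem:connection}: one rewrites the endpoints of $I_1(r;n)$ and $I_2(r;n)$ recorded at the start of Section~\ref{sec:technical_lemmas} under the reduction $[m_1,m_2,\dots]\mapsto[m_3,\dots]$ (when $m_2=1$) or $[m_1,m_2,\dots]\mapsto[m_2-1,m_3,\dots]$ (when $m_2\ge 2$), and checks that they become the endpoints of $I_1(\tilde r;n)$ and $I_2(\tilde r;n)$. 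The step I expect to be the only real obstacle is this endpoint bookkeeping in the degenerate subcases $k=3,\ m_2=1$ (where $\tilde r=1/m_3$) and $k=2,\ m_2\ge 3$ (where $\tilde r=1/(m_2-1)$): there $I_1(\tilde r;n)$ and $I_2(\tilde r;n)$ take the special form for torus-link slopes treated in the prequel, and the inclusion must be checked against those formulas rather than the generic ones. Since this verification is precisely the one already invoked inside the proof of Lemma~\ref{lem:connection}, and it applies verbatim to each of $s$ and $s'$, the lemma follows.
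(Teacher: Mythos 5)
Your argument is correct and is essentially the paper's own proof: both establish $l_1=l_1'=m$ from Lemma~\ref{lem:terms_II}, then pin down the second partial quotients via \cite[Corollary~3.14]{lee_sakuma_9} and \cite[Lemma~3.5]{lee_sakuma_9} (using Lemma~\ref{lem:twist_link} to exclude $r=[m,2]$ in the $m_2\ge 2$ case), and conclude by the same endpoint comparison under the reduction of \cite[Lemma~3.8]{lee_sakuma_9} that the paper also leaves implicit. Your remark about the degenerate subcases $\tilde r=1/m_3$ and $\tilde r=1/(m_2-1)$ is a reasonable point of care, but it causes no difficulty since those slopes are handed off to the prequel immediately after this lemma is applied.
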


\begin{proof}
Note that $s$ and $s'$ belong to
$(I_1(r;n) \setminus I_1(r))\cup (I_2(r;n) \setminus I_2(r))$ by Lemma~\ref{lem:claim2_II},
and therefore $CS(s)$ and $CS(s')$ contain both $S_1$ and $S_2$.
Write $s,s'$ as continued fraction expansions
$s=[p_1, p_2, \dots, p_h]$ and $s'=[q_1, q_2, \dots, q_l]$,
where $p_i, q_j \in \ZZ_+$ and $p_h, q_l \ge 2$.
Since both $CS(s)$ and $CS(s')$ consist of $m$ and $m+1$ by Lemma~\ref{lem:terms_II},
we have $p_1=q_1=m$ by \cite[Lemma~3.5]{lee_sakuma_9}.
If $m_2=1$, then by \cite[Corollary~3.14(1)]{lee_sakuma_9},
$(m+1,m+1)$ appears in $S_1$, so in $CS(s)$ and $CS(s')$.
This implies by \cite[Lemma~3.5]{lee_sakuma_9} that $p_2=q_2=1$.
Also if $m_2 \ge 2$, then by \cite[Corollary~3.14(2)]{lee_sakuma_9}
together with Lemma~\ref{lem:twist_link},
$(m,m)$ appears in $S_2$, so in $CS(s)$ and $CS(s')$.
This implies by \cite[Lemma~3.5]{lee_sakuma_9} that $p_2, q_2 \ge 2$.

Therefore, by \cite[Lemma~3.8]{lee_sakuma_9},
if $\tilde{r}=[m_3, \dots, m_k]$
then $\tilde{s}=[p_3, \dots, p_h]$ and $\tilde{s}'=[q_3, \dots, q_l]$,
whereas
if $\tilde{r}=[m_2-1, m_3, \dots, m_k]$
then $\tilde{s}=[p_2-1, p_3, \dots, p_h]$ and $\tilde{s}'=[q_2-1, q_3, \dots, q_l]$.
This together with the fact $p_1=q_1=m$
and $p_2=q_2$ yields the assertion.
\end{proof}

\begin{lemma}
\label{lem:key_ending}
The unoriented loops
$\alpha_{\tilde{s}}$ and $\alpha_{\tilde{s}'}$
represent the same conjugacy class in $\Hecke(\tilde{r};n)$.
\end{lemma}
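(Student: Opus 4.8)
The plan is to transform the annular diagram $M$ over $\Hecke(r;n)$ into a reduced nontrivial annular diagram $\tilde M$ over $\Hecke(\tilde r;n)=\langle a,b\svert u_{\tilde r}^{\,n}\rangle$ whose outer and inner boundary labels are $(u_{\tilde s})$ and $(u_{\tilde s'}^{\pm1})$, respectively. Once such a $\tilde M$ is produced, reading the relators off $\tilde M$ (the easy, ``converse'' direction of van Kampen's lemma, cf.\ \cite[Lemma~4.11]{lee_sakuma_9}) shows that $u_{\tilde s}$ and $u_{\tilde s'}^{\pm1}$ are conjugate in $\Hecke(\tilde r;n)$, which is exactly the assertion that $\alpha_{\tilde s}$ and $\alpha_{\tilde s'}$ represent the same conjugacy class. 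The raw materials are all in place: by Lemmas~\ref{lem:claim1_II} and \ref{lem:converging_diverging} we may take $M$ to be shaped as in \cite[Figure~3(a)]{lee_sakuma_9} with every degree-$4$ vertex converging or diverging; by Lemma~\ref{lem:terms_II} and \cite[Lemma~3.9]{lee_sakuma_9} the sequences $CS(s)$, $CS(s')$ and $CS(u_r^{\,n})=\lp 2n\langle S_1,S_2\rangle\rp$ all consist only of $m$ and $m+1$; and by Lemma~\ref{lem:twist_link} together with the standing assumption $r\ne 1/p$ we are, writing $r$ as in \cite[Lemma~3.8]{lee_sakuma_9}, in one of its two genuine cases ($m_2=1$ with $\tilde r=[m_3,\dots,m_k]$, or $m_2\ge 2$ with $\tilde r=[m_2-1,m_3,\dots,m_k]$), so that $S(r)=(S_1,S_2,S_1,S_2)$ and $S(\tilde r)=(T_1,T_2,T_1,T_2)$ are linked by the explicit block formulas of \cite[Lemma~3.12]{lee_sakuma_9}.

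Those block formulas will serve as a dictionary. They furnish a substitution $\sigma$, invertible on its domain, taking any word $v$ whose cyclic reduction has $CS$ consisting of $m$ and $m+1$ to a word $\sigma(v)$ with $CS(\sigma(v))$ equal to $CT$ of the $CS$ of $v$; concretely $\sigma$ replaces each maximal $(m+1)$-block together with its flanking $m$'s (or each maximal $m$-block together with its flanking $(m+1)$'s, according to the case) by the corresponding $T$-block. Since $CS(\tilde r)=CT(r)$, $CS(\tilde s)=CT(s)$ and $CS(\tilde s')=CT(s')$, and since this operation is compatible with taking cyclic $n$-th powers, $\sigma$ carries $u_r^{\pm n}$ to $u_{\tilde r}^{\pm n}$, $u_s$ to $u_{\tilde s}$, and $u_{s'}$ to $u_{\tilde s'}$. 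I would then relabel every edge of $M$ (after the unit-segment subdivision of Section~\ref{sec:technical_lemmas}) according to $\sigma$, keeping the combinatorial structure of $M$ essentially unchanged (subdividing or collapsing edges as needed to absorb the change of syllable lengths). This yields a labelled complex $\tilde M$ on the same annulus with every face labelled $u_{\tilde r}^{\pm n}$, outer boundary label $(u_{\tilde s})$ and inner boundary label $(u_{\tilde s'}^{\pm1})$. Because $\sigma$ is injective, two faces of $\tilde M$ sharing an edge cannot carry mutually inverse labels along it unless the corresponding faces of $M$ already did, so $\tilde M$ is reduced; and $\tilde M$ has the same positive number of faces as $M$, so it is nontrivial.

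The hard part will be to make rigorous sense of ``relabel $M$ according to $\sigma$'', i.e.\ to check that $\sigma$ is compatible both with the edge identifications of $M$ and with the concatenation of consecutive face-arcs along the boundary cycles $\alpha$ and $\delta$. The danger is that a block on which $\sigma$ acts can be split by a vertex of $M$: a maximal $(m+1)$-block of $CS(\phi(\partial D_i))$ may be divided between $\phi(\partial D_i^+)$ and $\phi(\partial D_i^-)$, and a block of $CS(\phi(\alpha))$ may be divided between two successive faces, so that an arc-by-arc application of $\sigma$ could a priori disagree with the global one --- this is the phenomenon behind the end-correction terms $\ell',\ell''$ appearing in the proofs of Lemmas~\ref{lem:connection} and \ref{lem:outside_orbit2}. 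This is precisely where Lemma~\ref{lem:converging_diverging} is indispensable: at a converging or diverging degree-$4$ vertex the two incident face-arcs meet additively in exactly the way that forces those corrections to vanish, so the block structure, and hence $\sigma$, reassembles consistently. The finitely many remaining special local configurations permitted by \cite[Figure~3(a)]{lee_sakuma_9} (vertices of degree $\ge 5$, the spurs of $\alpha$ and $\delta$, and the corners of the faces) would be handled one at a time, with Corollary~\ref{cor:consecutive_vertices} bounding how long a subword of a single $(u_r^{\pm n})$ can run along $\alpha$ or $\delta$ and thereby keeping the case analysis finite. Carrying this verification through is the technical core; granting it, the conjugacy of $u_{\tilde s}$ and $u_{\tilde s'}^{\pm1}$ in $\Hecke(\tilde r;n)$ follows immediately from $\tilde M$.
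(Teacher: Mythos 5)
Your proposal follows essentially the same route as the paper: the substitution $\sigma$ you describe is exactly the passage from $S$-sequences to $T$-sequences, and the paper likewise builds $\tilde M$ from $M$ by forgetting degree-$2$ vertices and assigning new alternating labels $\psi(\partial\tilde D_i^{\pm})$ with $S(\psi(\partial\tilde D_i^{\pm}))=T(\phi(\partial D_i^{\pm}))$, using Lemma~\ref{lem:converging_diverging} precisely as you indicate to make the block structure reassemble consistently at the vertices (the paper checks the finitely many local configurations explicitly). One small remark: you do not need $\tilde M$ to be reduced or to invoke Corollary~\ref{cor:consecutive_vertices} --- any annular diagram with the stated boundary labels already yields the conjugacy, so those parts of your argument can be dropped.
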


\begin{proof}
Let $\tilde{R}$ be the symmetrized subset of $F(a, b)$ generated by the single relator
$u_{\tilde{r}}^n$ of the upper presentation $\Hecke(\tilde{r};n)=\langle a, b \, | \, u_{\tilde{r}}^n \rangle$.
In the following, we construct an annular
$\tilde{R}$-diagram $(\tilde{M},\psi)$
from the given $R$-diagram $(M,\phi)$
such that $u_{\tilde{s}}$ is an outer boundary label and
$u_{\tilde{s}'}^{\pm 1}$ is an inner boundary label of $\tilde{M}$.
To this end, recall that $M$ is shaped as in \cite[Figure~3(a)]{lee_sakuma_9}.
By Lemma~\ref{lem:converging_diverging}, we assume that
every vertex of $M$ with degree $4$ is either converging or diverging.
Then, under \cite[Notation~4.18]{lee_sakuma_9},
each $S(\phi(\partial D_i^{\pm}))$ consists of $m$ and $m+1$,
and moreover, it does not contain $(m,m)$ or $(m+1,m+1)$
according to whether $m_2=1$ or $m_2\ge 2$.
Thus the $T$-sequence of $\phi(\partial D_i^{\pm})$ is defined as in
\cite[Definition~3.6]{lee_sakuma_9}
by counting the numbers of consecutive $m+1$'s or $m$'s
according to whether $m_2=1$ or $m_2\ge 2$.
For the precise definition of $T$-sequences,
see \cite[Definitions 8.4 and 8.7]{lee_sakuma_4}.

Now let $\tilde M$ be the map obtained from $M$
by forgetting all degree $2$ vertices, and let $\tilde D_i$ and $\partial \tilde D_i^{\pm}$,
respectively,
be the copies of $D_i$ and $\partial D_i^{\pm}$ ($1 \le i\le t$).
For each $i$ with ($1 \le i\le t$), we assign an alternating word,
$\psi(\partial \tilde D_i^{\pm})$, in $\{a,b\}$, to $\partial \tilde D_i^{\pm}$
as follows.

{\bf Step 1.}
For $i=1,\dots, t$, assign $\psi(\partial\tilde{D}_i^{+})$
so that
$\psi(\partial\tilde D_1^+ \cdots \partial \tilde{D}_i^+)
:=\psi(\partial\tilde D_1^+) \cdots \psi(\partial\tilde{D}_i^+)$
is alternating and
\[
S(\psi(\partial\tilde D_1^+ \cdots \partial\tilde D_i^+))
=T(\phi(\partial D_1^+ \cdots \partial D_i^+))
\]
Once this assignment is done, we see the following.
\begin{enumerate}[\indent \rm (i)]
\item
The word $\psi(\partial\tilde D_1^+ \cdots \partial\tilde D_t^+)$ is cyclically alternating,
because the sum of the terms of $CT(s)=CS(\tilde{s})$ is even.
\item
$CS(\psi(\partial\tilde D_1^+ \cdots \partial\tilde D_t^+))=
CT(\phi(\partial D_1^+ \cdots \partial D_t^+))=CT(\phi(\alpha))=CT(s)=CS(\tilde{s})$,
because $CS(\tilde{s})$ has even number of terms.
In particular, $(\psi(\tilde\alpha)) \equiv (u_{\tilde s}^{\pm 1})$ by \cite[Lemma~5.2]{lee_sakuma},
where $\tilde\alpha$ is an outer boundary cycle of $\tilde M$.
\end{enumerate}

{\bf Step 2.}
For $i=1,\dots, t$, assign $\psi(\partial\tilde D_i^-)$ so that
$\psi(\partial\tilde D_i):=\psi(\partial\tilde D_i^+)\psi(\partial\tilde D_i^-)^{-1}$ is an alternating word
and
$S(\psi(\partial\tilde D_i^-))=T(\phi(\partial D_i^-))$.
Once this assignment is done, we see the following.
\begin{enumerate}[\indent \rm (i)]
\item
For $i=1,\dots, t$,
$\psi(\partial\tilde D_1^- \cdots \partial\tilde D_i^-)$ is a reduced alternating word
such that
$S(\psi(\partial\tilde D_1^- \cdots \partial\tilde D_i^-))
=T(\phi(\partial D_1^- \cdots \partial D_i^-))$.
\item
The word $\psi(\partial\tilde D_1^- \cdots \partial\tilde D_t^-)$ is cyclically alternating,
because the sum of the terms of $CT(s')=CS(\tilde{s}')$ is even.
\item
$CS(\psi(\partial\tilde D_1^- \cdots \partial\tilde D_t^-))=
CT(\phi(\partial D_1^- \cdots \partial D_t^-))=CT(\phi(\delta^{-1}))=CT(s')=CS(\tilde{s}')$,
because $CS(\tilde{s})$ has even number of terms.
In particular, $(\psi(\tilde\delta)) \equiv (u_{\tilde{s}'}^{\pm 1})$ by \cite[Lemma~5.2]{lee_sakuma},
where $\tilde\delta$ is an inner boundary cycle of $\tilde M$.
\end{enumerate}
Here the assertion (i) is proved as follows,
as in the proof of assertion (i) in
\cite[Step~3 in Section~8.1]{lee_sakuma_4}.
We assume $m_2 \ge 2$ and verify the assertion when $i=2$.
(The other cases can be treated similarly.)
Since both $CS(\phi(\alpha))=CS(s)$ and
$CS(\phi(\partial D_1))=CS(\phi(\partial D_2))=\lp 2n\langle S_1,S_2\rangle \rp$
consist of $m$ and $m+1$ and do not contain $(m+1,m+1)$,
we have four possibilities around the vertex between $D_1$ and $D_2$
as described in the left figures in Figure~\ref{fig.taking_tsequence_1},
up to reflection in the vertical line passing through the vertex.
In each of the right figure, we may assume without loss of generality
that the upper left segment is oriented
so that it is converging into the vertex.
Then the orientations of the three remaining segments
in each of the right figures are specified
by the requirements in Step~1 and the new requirement
$S(\psi(\partial\tilde D_i^-))=T(\phi(\partial D_i^-))$ for $i=1,2$.
In each case, we can check that the condition
$S(\psi(\partial\tilde D_1^-\partial\tilde D_2^-))
=T(\phi(\partial D_1^-\partial D_2^-))$ holds.

\begin{figure}[h]
\includegraphics{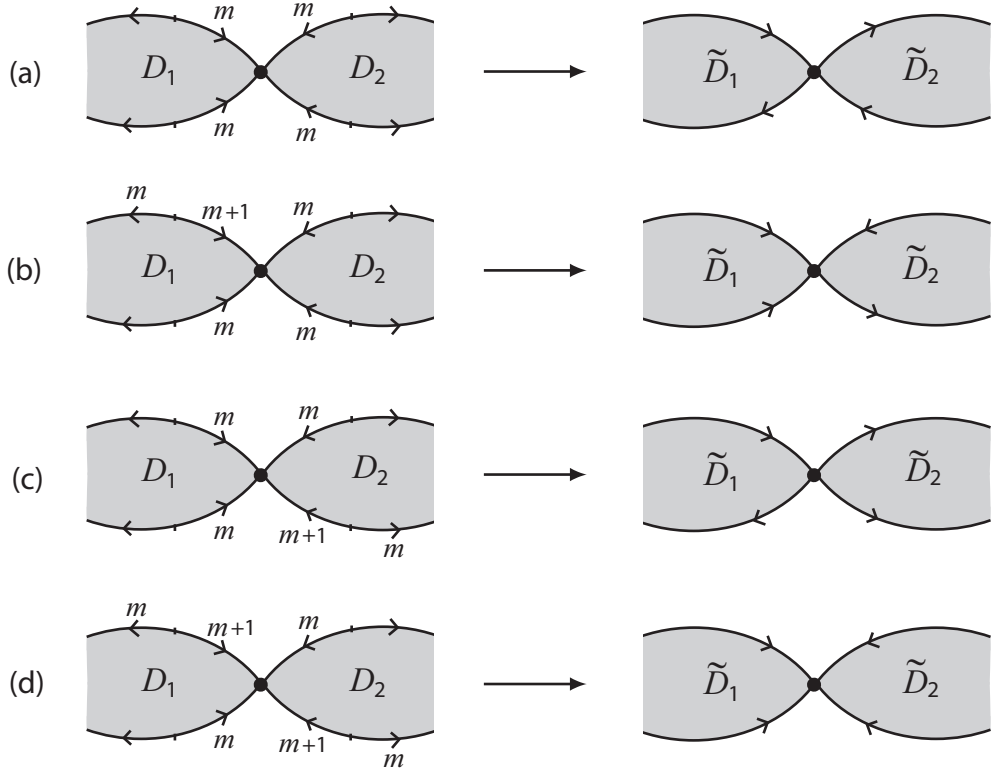}
\caption{
The construction of $\tilde{M}$ from $M$}
\label{fig.taking_tsequence_1}
\end{figure}

By the annular $\tilde{R}$-diagram $(\tilde M,\psi)$ constructed in the above,
we see that the unoriented loops
$\alpha_{\tilde{s}}$ and $\alpha_{\tilde{s}'}$
represent the same conjugacy class in $\Hecke(\tilde{r};n)$
(cf. \cite[Lemma~4.11]{lee_sakuma_9}).
\end{proof}

We repeatedly apply Lemmas~\ref{lem:claim3_II} and \ref{lem:key_ending}
to obtain a contradiction as follows.
Recall that
$r=[m_1, \dots, m_k]$ with $m_1=m \ge 2$, $m_k\ge 2$ and $k \ge 2$.
Thus $\tilde r$ is equal to
$[m_2-1,m_3,\dots, m_k]$ or $[m_3,\cdots, m_k]$
according to whether $k\ge 3$ or both $m_2=1$ and $k\ge 3$.
In particular, $0<\tilde r \le 1$.
Since $r\ne [m,2]$ by Lemma~\ref{lem:twist_link},
we must have $0<\tilde r<1$.
If $1/2<\tilde r<1$, then $0<1-\tilde r <1/2$ and,
by \cite[Lemma~2.5]{lee_sakuma_9},
there is an orbifold homeomorphism $f$ from
$\orbs(\tilde r;n)$ to $\orbs(1-\tilde r;n)$
which maps the 2-bridge sphere of $\orbs(\tilde r;n)$ to
that of $\orbs(1-\tilde r;n)$,
such that the restriction of $f$ to the $2$-bridge sphere
maps the simple loop $\alpha_s$ to $\alpha_{1-s}$
for any $s\in\QQQ$.
Moreover the transformation $s\mapsto 1-s$ maps
$I(\tilde r;n)$ to $I(1-\tilde r;n)$.
So we assume $0<\tilde r\le 1/2$.
If $\tilde r = 1/p$ for some $p\ge 2$,
then we have a contradiction
by virtue of \cite[Main Theorem~2.5(1)]{lee_sakuma_9}.
So, we may assume $\tilde r \ne 1/p$,
and therefore, we can apply Lemmas~\ref{lem:claim3_II} and \ref{lem:key_ending}
to $\tilde r$.
By repeating this argument,
we finally arrive at the situation that
for either $r'=1/p$ or $r'=[p,2]$ for some integer $p \ge 2$,
there are two rational numbers $t, t' \in I_1(r';n) \cup I_2(r';n)$
for which the simple loops $\alpha_{t}$ and $\alpha_{t'}$ represent the same conjugacy class in $\Hecke(r';n)$.
The former is a contradiction to \cite[Main Theorem~2.5(1)]{lee_sakuma_9},
and the latter is a contradiction to Lemma~\ref{lem:twist_link}.
This completes the proof of Main Theorem~\ref{thm:conjugacy}(1).
\qed

\section{Proof of Main Theorem~\ref{thm:conjugacy}(2) and (3)}
\label{sec:proof of main theorem(3)}

Main Theorem~\ref{thm:conjugacy}(2) can be proved
by simply replacing $1/p$ with a non-integral rational number $r$
in \cite[Section~7]{lee_sakuma_9}.
The only difference is to use Corollary~\ref{cor:consecutive_vertices},
instead of \cite[Corollary~5.2]{lee_sakuma_9},
at the end of the proof.

It remains to prove Main Theorem~\ref{thm:conjugacy}(3).
Let $S(r)= (S_1, S_2, S_1, S_2)$ be as in \cite[Lemma~3.9]{lee_sakuma_9}.
We recall the following lemma.

\begin{lemma}
\label{lem:half}
{\rm (1)}
Suppose that $v$ is a cyclically alternating word which represents the trivial element
in $G(K(r))=\langle a, b \, | \, u_r \rangle$.
Then the cyclic word $(v)$ contains a subword $w$ of the cyclic word
$(u_r^{\pm 1})$ such that $S(w)$ is $(S_1, S_2, \ell)$ or $(\ell, S_2, S_1)$
for some $\ell \in \ZZ_+$.

{\rm (2)}
Suppose that $v$ is a cyclically alternating word which represents the trivial element
in $\Hecke(r;n)=\langle a, b \, | \, u_r^n \rangle$.
Then the cyclic word $(v)$ contains a subword $w$ of the cyclic word
$(u_r^{\pm n})$ such that
$S(w)$ is $((2n-1)\langle S_1, S_2 \rangle, \ell)$ or
$(\ell, (2n-1)\langle S_2, S_1 \rangle)$, where $\ell \in \ZZ_+$.
\end{lemma}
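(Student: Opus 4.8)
The plan is to prove (1) and (2) together by passing to a reduced van Kampen diagram over the relevant upper presentation and extracting the desired subword from a boundary face, using the small cancellation structure theory of \cite{lee_sakuma_9} (and, for part~(1), of \cite{lee_sakuma_2}) together with the dictionary between ``products of pieces'' and $S$-sequences provided by \cite[Lemma~4.2]{lee_sakuma_9} and \cite[Lemma~4.3(3)]{lee_sakuma_9}. Set $w_0:=u_r$ for (1) and $w_0:=u_r^n$ for (2), so that $CS(w_0)=\lp S_1,S_2,S_1,S_2\rp$ and $CS(w_0)=\lp 2n\langle S_1,S_2\rangle\rp$ respectively.

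First I would dispose of the trivial reductions. We may assume $v$ is nonempty; being cyclically alternating it is then cyclically reduced and nontrivial in $F(a,b)$, while $w_0$ is not a proper power, so the hypothesis $v=1$ in $\langle a,b\svert w_0\rangle$ yields a nonempty reduced disk diagram $M$ over this presentation with $(\phi(\partial M))\equiv(v)$, which we may take so that $v$ and every face boundary are cyclically alternating. If $M$ has a single face, then $(v)\equiv(w_0^{\pm 1})$ and the required subword $w$ is read off directly from $CS(w_0)$ (take $S(w)=(S_1,S_2,m+1)$ in case~(1), and $S(w)=((2n-1)\langle S_1,S_2\rangle,m+1)$ in case~(2)). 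So assume $M$ has at least two faces.

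Next I would invoke the structure theory for reduced diagrams over these upper presentations --- the same mechanism underlying \cite[Corollary~4.17]{lee_sakuma_9} and its disk-diagram counterpart (cf.\ \cite{lee_sakuma_2}) --- to produce a boundary face $D$ of $M$ whose boundary, apart from a product of at most three pieces, is a consecutive subpath of $\partial M$. Writing $\phi(\partial D)\equiv w'\beta$ with $\beta$ a product of at most three pieces, the word $w'$ is simultaneously a subword of $(v)$ and of $(w_0^{\pm 1})$. Now I would apply the pieces-to-$S$-sequence dictionary: by \cite[Lemma~4.2]{lee_sakuma_9} no piece has $S$-sequence containing $(S_1,S_2)$ or $(S_2,S_1)$, and by \cite[Lemma~4.3(3)]{lee_sakuma_9} a product of at most three pieces has $S$-sequence spanning at most one block $\langle S_1,S_2\rangle$ (up to ragged partial blocks at its two ends), strictly shorter than a full block of $CS(w_0)$. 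Hence deleting $\beta$ from the cyclic word $CS(w_0)$ leaves, as $S(w')$, all but one of the blocks of $CS(w_0)$: in case~(1), $S(w')$ contains $(S_1,S_2)$ or $(S_2,S_1)$ as consecutive blocks, and in case~(2), $S(w')$ contains $((2n-1)\langle S_1,S_2\rangle)$ or $((2n-1)\langle S_2,S_1\rangle)$ as consecutive blocks, in either case together with at least one further genuine term on one side --- here one uses that $S_1$ begins and ends with $m+1$ and $S_2$ begins and ends with $m$, by \cite[Lemma~3.9]{lee_sakuma_9}, so the term of $CS(w_0)$ adjacent to the deleted block is a positive integer $\ell$ that does not merge with its neighbour. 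Trimming $w'$ to the shortest subword $w$ still carrying these blocks and this one extra term, and replacing $w_0$ by $w_0^{-1}$ if needed to normalize the order $(S_1,S_2)$ versus $(S_2,S_1)$, yields $S(w)=(S_1,S_2,\ell)$ or $(\ell,S_2,S_1)$ in case~(1), and $S(w)=((2n-1)\langle S_1,S_2\rangle,\ell)$ or $(\ell,(2n-1)\langle S_2,S_1\rangle)$ in case~(2).

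The main obstacle is the accounting in this last step: one must verify that the at most three pieces absorbed into $\beta$ really cost no more than the single extra term the conclusion permits --- this is exactly where the piece bound of \cite[Lemma~4.2]{lee_sakuma_9} is indispensable, a crude ``more than half the relator'' estimate (such as a one-relator spelling theorem would give in case~(1)) being insufficient precisely when $|S_1|$ and $|S_2|$ are very unequal. One must also track the ragged partial blocks at the two ends of $w'$ and the $\pm 1$/orientation ambiguity so that they collapse cleanly into the single term $\ell$, and treat separately the degenerate configurations in which $\beta$ would exhaust an entire block $S_1$ (forcing $S_1$ itself to be a product of at most three pieces), for instance by passing to a second boundary face supplied by the structure theory, as well as the low-complexity cases such as $n=2$ or $r=[m,2]$ where the relevant inequalities are tight.
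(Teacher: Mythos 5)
Your overall framework (reduced van Kampen diagram, Greendlinger-type boundary face, translation from pieces to $S$-sequences) is the machinery that underlies this lemma, but the paper does not re-run it: part (1) is quoted verbatim as \cite[Theorem~6.3]{lee_sakuma}, and part (2) is obtained by combining the first assertion of \cite[Corollary~4.12]{lee_sakuma_7} --- which produces a subword of $(v)$ that is a product of $4n-1$ pieces of $(u_r^{\pm n})$ but not of fewer --- with \cite[Lemma~4.3(2)]{lee_sakuma_9}, which converts that piece count into exactly the stated $S$-sequences.

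The gap in your reconstruction is quantitative, and it sits precisely at the step you flag as ``the main obstacle.'' The dictionary of \cite[Lemmas~4.2 and 4.3]{lee_sakuma_9} runs opposite to the way you use it: a single piece is only prevented from containing $(S_1,S_2)$ or $(S_2,S_1)$ in its $S$-sequence, so a piece may well contain an entire $S_1$ or $S_2$ together with fragments of its neighbours. A product of three pieces can therefore span up to three of the $4n$ blocks of $CS(u_r^n)=\lp 2n\langle S_1,S_2\rangle\rp$, not ``at most one block $\langle S_1,S_2\rangle$'' as you assert. Deleting such a $\beta$ only guarantees that $w'$ is a product of $4n-3$ pieces, hence an $S$-sequence of the shape $((2n-2)\langle S_1,S_2\rangle,S_1,\ell)$ at best --- strictly weaker than the required $((2n-1)\langle S_1,S_2\rangle,\ell)$. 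The situation is worse in part (1): the presentation $\langle a,b\svert u_r\rangle$ is only $C(4)$--$T(4)$, where the generic boundary face has interior degree at most $2$, yielding a subword that is a product of just two pieces, far short of $(S_1,S_2,\ell)$. What is actually needed is a boundary face whose \emph{interior} part is a single piece, and that is not a consequence of generic small cancellation theory; it is the refined structure theorem for reduced diagrams over these particular presentations proved in \cite{lee_sakuma} and \cite{lee_sakuma_7} (via their layer decomposition). Without importing those results, the accounting cannot be closed by \cite[Lemma~4.2]{lee_sakuma_9} alone, and your proposed ``passing to a second boundary face'' does not repair it, since each additional face re-introduces the same loss of pieces along its interior edges.
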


\begin{proof}
(1) This is nothing other than \cite[Theorem~6.3]{lee_sakuma}.

(2) By the first assertion of
\cite[Corollary~4.12]{lee_sakuma_7},
we see that the cyclic word $(v)$ contains a subword $w$ of the cyclic word
$(u_{r}^{\pm n})$
which is a product of $4n-1$ pieces but is not a product of
less than $4n-1$ pieces.
Hence, we obtain the desired result by \cite[Lemma~4.3(2)]{lee_sakuma_9}.
\end{proof}

Suppose on the contrary that there is a rational number $s$ in $I_1(r;n) \cup I_2(r;n)$
for which $u_s^t=1$ in $\Hecke(r;n)$ for some integer $t \ge 1$.
Then clearly $u_s^t=1$ also in $G(K(r))$.
Since $G(K(r))$ is torsion-free, $u_s=1$ in $G(K(r))$.
By \cite[Main Theorem~2.3]{lee_sakuma},
this implies that $s$ lies in the $\RGPP{r}$-orbit of $r$ or $\infty$.
Hence $|u_s|> |u_r|$.

On the other hand, since $u_s^t=1$ in $\Hecke(r;n)$,
Lemma~\ref{lem:half}(2) implies that
we may write $\bar{u}_s^t \equiv wz$, where $\bar{u}_s$ is a cyclic permutation of $u_s$
and $w$ is a subword of $(u_s^t)$ as described in Lemma~\ref{lem:half}(2).
If $w$ is contained in $\bar{u}_s$,
then by \cite[Lemma~4.3(3)]{lee_sakuma_9},
$CS(s)=CS(\bar{u}_s)$ contains $((2n-1)\langle S_1, S_2 \rangle)$ or
$((2n-1)\langle S_2, S_1 \rangle)$ as a subsequence.
But, since $s \in I_1(r;n) \cup I_2(r;n)$, this is impossible by
Lemma~\ref{lem:connection}.
So $w$ cannot be contained in $\bar{u}_s$, and hence
$\bar{u}_s$ is a proper initial subword of $w$.
Thus
$\bar{u}_s$ is a subword of the cyclic word $(u_r^{\pm n})$.
Here, since $|\bar{u}_s|=|u_s| > |u_r|$,
we may put $\bar{u}_s \equiv v^d v_1$,
where $d \in \ZZ_+$, $v$ is a cyclic permutation of $u_r$ or $u_r^{-1}$
and $|v_1| <|u_r|$.
Note that $|v_1| \ge 1$, for otherwise we would have $\bar{u}_s \equiv v^d$
so that $CS(s)=\lp 2d \langle S_1, S_2 \rangle \rp$,
which yields that $d \ge 2$ since $s \neq r$
and that $s=dq/dp$ if $r=q/p$ by \cite[Remark~3.11]{lee_sakuma_9},
a contradiction.

Then $v_1=\bar{u}_s=1$ in $G(K(r))$.
Moreover, $v_1$ is a proper initial subword of $v$
and so a proper initial subword of $\bar{u}_s$.
This implies that $v_1$ is cyclically alternating,
because $\bar{u}_s$ is cyclically alternating and $|v_1|=|u_s|-d|u_r|$ is even.
Also since $v_1=1$ in $G(K(r))$, Lemma~\ref{lem:half}(1) implies that
the cyclic word $(v_1)$ contains a proper subword $w$
such that $S(w)$ is $(S_1, S_2)$ or $(S_2, S_1)$.
So $|v_1| > \frac{1}{2} |u_r|=\frac{1}{2} |v|$.
Now, let $v_2$ be the terminal subword of $v$ such that $v\equiv v_1v_2$.
Then $v_2$ is also cyclically alternating and $v_2=1$ in $G(K(r))$.
Thus the above argument implies that $|v_2| > \frac{1}{2} |v|$.
This contradicts the inequality $|v_2| =|v|-|v_1|<\frac{1}{2} |v|$.
\qed

\bibstyle{plain}

\bigskip


\begin{thebibliography}{3}

\bibitem{lee_sakuma}
D.\ Lee and M.\ Sakuma,
{\em Epimorphisms between $2$-bridge link groups:
homotopically trivial simple loops on $2$-bridge spheres},
Proc. London Math. Soc. {\bf 104} (2012), 359--386.

\bibitem{lee_sakuma_7}
D.\ Lee and M.\ Sakuma,
{\em Epimorphisms from $2$-bridge link groups onto Heckoid groups {\rm (II)}},
Hiroshima Math. J. {\bf 43} (2013), 265--284.

\bibitem{lee_sakuma_2}
D.\ Lee and M.\ Sakuma,
{\em Homotopically equivalent simple loops
on $2$-bridge spheres in $2$-bridge link complements {\rm (I)}},
to appear in Geom. Dedicata, arXiv:1010.2232.

\bibitem{lee_sakuma_4}
D.\ Lee and M.\ Sakuma,
{\em Homotopically equivalent simple loops
on $2$-bridge spheres in $2$-bridge link complements {\rm (III)}},
to appear in Geom. Dedicata, arXiv:1111.3562.

\bibitem{lee_sakuma_9}
D.\ Lee and M.\ Sakuma,
{\em Homotopically equivalent simple loops
on $2$-bridge spheres in Heckoid orbifolds for $2$-bridge links {\rm (I)}},
arXiv:1402.6870.

\bibitem{Ohtsuki-Riley-Sakuma}
T.\ Ohtsuki, R.\ Riley, and M.\ Sakuma,
{\em Epimorphisms between $2$-bridge link groups},
Geom. Topol. Monogr. {\bf 14} (2008), 417--450.

\end{thebibliography}
\end{document}